   \numberwithin{equation}{section}
\journal{ } 
\date{} 
\def\ps@pprintTitle{ 
\let\@oddhead\@empty 
\let\@evenhead\@empty 
\let\@oddfoot\@empty 
\let\@evenfoot\@oddfoot } 
\newtheorem{thm}{Theorem}[section]
\newtheorem{lem}[thm]{Lemma}
\newtheorem{prop}[thm]{Proposition}
\newtheorem{defn}[thm]{Definition}
\newtheorem{prope}[thm]{Property}
\begin{document}
\begin{frontmatter}
\author{Hongfeng Li$^{1}$}
\ead{lihf728@nenu.edu.cn}
\author{Kefeng Liu$^{2}$}
\ead{kefeng@cqut.edu.cn}
\author{Yong Wang$^{1,*}$}
\ead{wangy581@nenu.edu.cn}
\cortext[cor]{Corresponding author.}

\address{$^1$School of Mathematics and Statistics, Northeast Normal University, Changchun, 130024, China}
\address{$^2$Mathematical Science Research Center, Chongqing University of Technology, Chongqing, 400054, China}

\title{The noncommutative residue and sub-Riemannian limits\\ for the twisted BCV spaces}
\begin{abstract}
In this paper, we derive the sub-Riemannian version of the Kastler-Kalau-Walze type theorem and the Dabrowski-Sitarz-Zalecki type theorem for the twisted BCV spaces. We also compute the Connes conformal invariants for the twisted product, as well as the sub-Riemannian limits of the Connes conformal invariants for the twisted BCV spaces.
\end{abstract}
\begin{keyword}  Scalar curvature; sub-Riemannian limit; conformal invarints; twisted BCV spaces; Kastler-Kalau-Walze type theorems; Dabrowski-Sitarz-Zalecki type theorems

\end{keyword}
\end{frontmatter}
\textit{Mathematics Subject Classification:}
53C40; 53C42.
\section{ Introduction}
\indent The (singly) warped product $B\times_bF$  of two pseudo-Riemannian manifolds $(B,g_B)$ and
    $(F,g_F)$ with a smooth function $b:B\rightarrow (0,\infty)$ is the product
    manifold $B\times F$ with the metric tensor $$g=g_B\oplus
    b^2g_F.$$ Here, $(B,g_B)$ is called the base manifold and
    $(F,g_F)$ is called as the fiber manifold and $b$ is called as
    the warping function. Generalized Robertson-Walker space-times
    and standard static space-times are two well-known warped
    product spaces. The concept of warped products was first introduced by
    Bishop and O'Neil (see \cite{BO}) to construct examples of Riemannian
    manifolds with negative curvature. In Riemannian geometry,
    warped product manifolds and their generic forms have been used
    to construct new examples with interesting curvature properties
    since then. In \cite{DD}, Dobarro and Dozo had studied from the viewpoint of partial differential equations and variational methods,
    the problem of showing when a Riemannian metric of constant scalar curvature can be produced on a product manifolds by a warped product construction.
    In \cite{EJK}, Ehrlich et al. got explicit solutions to warping function to have a constant scalar curvature for generalized Robertson-Walker space-times.
    In \cite{ARS}, explicit solutions were also obtained for the warping
    function to make the space-time as Einstein when the fiber is
    also Einstein.\\
    \indent Singly warped products have two natural generalizations. A doubly warped product
      $(M,g)$ is a
      product manifold of form
      $M=_fB\times_{b}F$, with smooth functions $$b:\,B\rightarrow (0,\infty), \ \ f:\,F\rightarrow (0,\infty)$$
      and the metric tensor $g=f^2g_B\oplus b^2g_F.$ In \cite{U2}, \"{U}nal studied geodesic completeness of Riemannian
      doubly warped products and Lorentzian doubly warped products. A twisted product
      $(M,g)$ is a
      product manifold of form
      $M=B\times_{b}F$, with a smooth function $b:B\times F\rightarrow (0,\infty)$,
      and the metric tensor $g=g_B\oplus b^2g_F.$ In \cite{FGKU}, they
      showed that mixed Ricci-flat twisted products could be expressed as warped
      products. As a consequence, any Einstein twisted products are
      warped products.\\
\indent  In \cite{Co1}, Connes used the noncommutative residue to derive a conformal 4-dimensional Polyakov action analogy. Connes showed us that the noncommutative residue on a compact manifold $M$ coincided with the Dixmier's trace on pseudo-differential operators of order $-{\rm {dim}}M$ \cite{Co2}.
And Connes claimed that the noncommutative residue of the square of the inverse of the Dirac operator was proportioned to the Einstein-Hilbert action.  Kastler \cite{Ka} gave a
brute-force proof of this theorem. Kalau and Walze proved this theorem in the normal coordinates system simultaneously \cite{KW} .
Ackermann proved that
the Wodzicki residue  of the square of the inverse of the Dirac operator ${\rm  Wres}(D^{-2})$ in turn is essentially the second coefficient
of the heat kernel expansion of $D^{2}$ \cite{Ac}. On the other hand, Wang generalized the Connes' results to the case of manifolds with boundary \cite{Wa1,Wa2},
and proved the Kastler-Kalau-Walze type theorem for the Dirac operator and the signature operator on lower-dimensional manifolds
with boundary \cite{Wa3}. In \cite{Wa3,Wa4}, Wang computed $$\widetilde{{\rm Wres}}[\pi^+D^{-1}\circ\pi^+D^{-1}]$$ and $$\widetilde{{\rm Wres}}[\pi^+D^{-2}\circ\pi^+D^{-2}],$$ where the two operators are symmetric, in these cases the boundary term vanished. But for $$\widetilde{{\rm Wres}}[\pi^+D^{-1}\circ\pi^+D^{-3}],$$ Wang and Wang got a nonvanishing boundary term \cite{Wa5}, and gave a theoretical explanation for gravitational action on boundary. In others words, Wang provided a kind of method to study the Kastler-Kalau-Walze type theorem for manifolds with boundary. In \cite{DL}, Dabrowski et al. defined bilinear functionals of vector fields and differential forms, the densities of which yielded the  metric and Einstein tensors on even dimensional Riemannian manifolds. In \cite{WW2}, the authors got the spectral Einstein functional
 associated with Dirac operators with torsion on compact manifolds with  boundary. For lower dimensional compact Riemannian manifolds
  with  boundary, they computed the lower dimensional  residue of $$\widetilde{\nabla}_{X}\widetilde{\nabla}_{Y}D_{T}^{-4}$$ and
   got the Dabrowski-Sitarz-Zalecki type theorem. In \cite{Wu1}, Wu and Wang defined the spectral Einstein functional associated with the Dirac operator for manifolds with boundary, and computed the noncommutative residue $$\widetilde{{\rm Wres}}[\pi^+(\nabla_X^{S(TM)}\nabla_Y^{S(TM)}D^{-1})\circ\pi^+(D^{-(n-1)})]$$ and $$\widetilde{{\rm Wres}}[\pi^+(\nabla_X^{S(TM)}\nabla_Y^{S(TM)}D^{-2})\circ\pi^+(D^{-(n-2)})]$$ on $n$-dimensional compact manifolds, $n$ is even. They also computed $$\widetilde{{\rm Wres}}[\pi^+(\nabla_X^{S(TM)}\nabla_Y^{S(TM)}D^{-2})\circ\pi^+(D^{-(n-1)})]$$ for $n$-dimensional manifolds with boundary, $n$ is odd. Hence, we get a method to study the Dabrowski-Sitarz-Zalecki type theorem for manifolds with boundary. For an even dimensional, compact, conformal manifold without boundary, Ugalde constructed a conformally invariant differential operator of order the dimension of the manifold \cite{UM}. In the conformally flat case, this operator coincides with the critical GJMS
operator of Graham-Jenne-Mason-Sparling. The author used the Wodzicki residue of a pseudo-differential operator of order $-2$, originally defined by Connes, acting on middle
dimensional forms.\\
\indent In \cite{LiuW}, Liu and Wang computed the adiabatic limit of the scalar curvature explicitly by some formulas \cite{LiuZ}, they used this result to give a new vanishing theorem
for general foliations with spin leave. For a manifold $M$ with splitting tangent bundle $TM=F\oplus F^\perp$ and $F$ being not integrable, they also computed the adiabatic limit of the scalar
curvature similarly, then some extra singular terms $O(\frac{1}{\varepsilon})$ would appear. They considerded the adiabatic limit of $\varepsilon D_F^2$ and got a vanishing theorem. By the Bismut-Kodiara-Nakano formula and taking adiabatic limits, a vanishing theorem can be got for complex foliations. Meanwhile, a Kastler-Kalau-Walze theorem analogy for foliations with spin leave was given. In \cite{WW1},  Wang and Wei computed sub-Riemannian limits of Gaussian curvature for a Euclidean $C^2$-smooth surface in the BCV spaces and the twisted Heisenberg group away from characteristic points and signed geodesic curvature for Euclidean $C^2$-smooth curves on surfaces. They got the sub-Riemannian Gauss-Bonnet theorem for BCV spaces.

 The motivation of this paper is to get the sub-Riemannian version of the Kastler-Kalau-Walze type theorem and the Dabrowski-Sitarz-Zalecki type theorem for the twisted BCV spaces. We compute the Connes conformal invariants for the twisted product. We also compute the sub-Riemannian limit of the Connes conformal invariants for the twisted BCV spaces. In \cite{WW1} and related some works on sub-Riemannian Gauss-Bonnet theorem, the sub-Riemannian limit on only two-dimensional surfaces was computed. In order to get the sub-Riemannian version of the Kastler-Kalau-Walze type theorem and the Dabrowski-Sitarz-Zalecki type theorem, we compute the sub-Riemannian limit for the higher dimensional manifolds i.e. twisted BCV spaces for the first time.

\indent The paper is organized in the following way. In Section 2, we compute the sub-Riemannian limit of scalar curvature of twisted BCV spaces and get the sub-Riemannian version of the Kastler-Kalau-Walze type theorem for twisted BCV spaces. In Section 3, we compute sub-Riemannian limits of the Einstein functional on the twisted BCV spaces and get the sub-Riemannian version of the Dabrowski-Sitarz-Zalecki type theorem for twisted BCV spaces. In Section 4, we compute the Connes conformal invariants for the twisted product and also compute the sub-Riemannian limit of the Connes conformal invariants for the twisted BCV spaces.


\section{ The sub-Riemannian Kastler-Kalau-Walze type theorem for the twisted BCV spaces }
We first introduce some notations on the BCV spaces. Let $\lambda$ and $\tau$ be two real numbers with $\tau>0$. The BCV spaces $N$ is the set
$$\{(x_1,x_2,x_3)\in \mathbb{R}^3|1+\frac{\lambda}{4}(x_1^2+x_2^2)>0\}.$$
 Let
\begin{equation}
X_1=[1+\frac{\lambda}{4}(x_1^2+x_2^2)]\partial_{x_1}-\tau x_2 \partial_{x_3}, ~~X_2=[1+\frac{\lambda}{4}(x_1^2+x_2^2)]\partial_{x_2}+\tau x_1 \partial_{x_3},~~X_3=\partial_{x_3}.
\end{equation}
Then
\begin{equation}
\partial_{x_1}=\frac{1}{1+\frac{\lambda}{4}(x_1^2+x_2^2)}(X_1+\tau x_2X_3),~~\partial_{x_2}=\frac{1}{1+\frac{\lambda}{4}(x_1^2+x_2^2)}(X_2-\tau x_1X_3),~~\partial_{x_3}=X_3,
\end{equation}
and ${\rm span}\{X_1,X_2,X_3\}=TN.$ 

Let $H={\rm span}\{X_1,X_2\}$ be the horizontal distribution on $N$.
Let
\begin{equation}
\omega_1=\frac{dx_1}{1+\frac{\lambda}{4}(x_1^2+x_2^2)},~~\omega_2=\frac{dx_2}{1+\frac{\lambda}{4}(x_1^2+x_2^2)},~~\omega:
=\omega_3=dx_3+\tau\frac{x_2dx_1-x_1dx_2}{1+\frac{\lambda}{4}(x_1^2+x_2^2)}.
\end{equation}
Then $H={\rm Ker}\omega$. For the constant $L>0$, let
$$g_L=\omega_1\otimes \omega_1+\omega_2\otimes \omega_2+L\omega\otimes \omega,~~g=g_1$$ be the Riemannian metric on $N$. Then 
$$X_1,X_2,\widetilde{X_3}:=L^{-\frac{1}{2}}X_3$$ are orthonormal basis on $N$ with respect to $g_L$. We have
\begin{equation}\label{a20}
[X_1,X_2]=-\frac{\lambda}{2}x_2X_1+\frac{\lambda}{2}x_1X_2+2\tau X_3
,~~[X_2,X_3]=0,~~[X_1,X_3]=0.
\end{equation}
For $\tau>0$, $H$ is not integrable. 

The complete classification of BCV spaces for $\tau>0$ is as follow (see p.753 in \cite{FMV}): if $\lambda>0$ and $\tau>0$, then $$(N,g)\cong SU(2)\backslash \{\infty\}; $$ if $\lambda<0$ and $\tau>0$, then $$(N,g)\cong \widetilde{SL}(2,\mathbb{R});$$
if $\lambda=0$ and $\tau>0$, then $$(N,g)\cong \mathbb{H}^1.$$
Let $\nabla^L$ be the Levi-Civita connection on $N$ with respect to $g_L$. By the Koszul formula, we have
\begin{equation}\label{a21}
2\langle \nabla^L_{X_i}X_j,X_k\rangle_L=\langle[X_i,X_j],X_k\rangle_L-\langle[X_j,X_k],X_i\rangle_L+\langle[X_k,X_i],X_j\rangle_L,
\end{equation}
where $i,j,k=1,2,3$. By (\ref{a20}) and (\ref{a21}), we have
\begin{lem}\label{lemma1}
Let $N$ be the BCV spaces, then
\begin{align}
&\nabla^L_{X_1}X_1=\frac{\lambda}{2}x_2X_2,~~~\nabla^L_{X_1}X_2=-\frac{\lambda}{2}x_2X_1+\tau X_3,~~~\nabla^L_{X_2}X_1=-\frac{\lambda}{2}x_1X_2-\tau X_3,\nonumber\\
&\nabla^L_{X_1}X_3=-\tau LX_2,~~~\nabla^L_{X_3}X_1=-\tau LX_2,~~~\nabla^L_{X_2}X_2=\frac{\lambda}{2}x_1X_1,\nonumber\\
& \nabla^L_{X_2}X_3=\nabla^L_{X_3}X_2=\tau LX_1,~~~\nabla^L_{X_3}X_3=0.
\end{align}
\end{lem}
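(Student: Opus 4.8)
\noindent\emph{Proof proposal.} Since $\{X_1,X_2,\widetilde{X_3}\}$ with $\widetilde{X_3}=L^{-\frac{1}{2}}X_3$ is a $g_L$-orthonormal frame, the inner products $\langle X_i,X_j\rangle_L$ are \emph{constants}, namely $\langle X_1,X_1\rangle_L=\langle X_2,X_2\rangle_L=1$, $\langle X_3,X_3\rangle_L=L$, and $\langle X_i,X_j\rangle_L=0$ for $i\neq j$. Hence the ``derivative-of-the-metric'' terms in the general Koszul identity drop out, and the Levi-Civita connection is completely determined by the reduced formula (\ref{a21}). The plan is to run (\ref{a21}) over the ordered pairs $(i,j)$: for each pair I would compute the three scalars $\langle\nabla^L_{X_i}X_j,X_k\rangle_L$, $k=1,2,3$, from the right-hand side of (\ref{a21}) using only the brackets in (\ref{a20}) (and their antisymmetry $[X_b,X_a]=-[X_a,X_b]$), and then reassemble
\[
\nabla^L_{X_i}X_j=\langle\nabla^L_{X_i}X_j,X_1\rangle_L\,X_1+\langle\nabla^L_{X_i}X_j,X_2\rangle_L\,X_2+\frac{1}{L}\langle\nabla^L_{X_i}X_j,X_3\rangle_L\,X_3 .
\]

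By (\ref{a20}) the computation is short because the only nonzero bracket is $[X_1,X_2]=-\frac{\lambda}{2}x_2X_1+\frac{\lambda}{2}x_1X_2+2\tau X_3$ (equivalently $[X_2,X_1]$), while $[X_1,X_3]=[X_2,X_3]=0$; so in (\ref{a21}) every contribution not involving the index pair $\{1,2\}$ vanishes, and each nonzero entry comes from a single pairing against the frame. For instance, $2\langle\nabla^L_{X_1}X_1,X_2\rangle_L=-2\langle[X_1,X_2],X_1\rangle_L=\lambda x_2$ while $\langle\nabla^L_{X_1}X_1,X_1\rangle_L=\langle\nabla^L_{X_1}X_1,X_3\rangle_L=0$, which yields $\nabla^L_{X_1}X_1=\frac{\lambda}{2}x_2X_2$, and the other entries of the table are obtained in exactly the same way. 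It is enough to treat the cases $i\leq j$, since the remaining orderings are fixed by torsion-freeness via $\nabla^L_{X_j}X_i=\nabla^L_{X_i}X_j-[X_i,X_j]$; in particular $[X_1,X_3]=[X_2,X_3]=0$ forces $\nabla^L_{X_3}X_1=\nabla^L_{X_1}X_3$ and $\nabla^L_{X_3}X_2=\nabla^L_{X_2}X_3$, in agreement with the asserted list.

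The only point demanding attention — and it is a matter of bookkeeping rather than a genuine obstacle — is the normalization $\langle X_3,X_3\rangle_L=L$, which enters in two distinct places: once when the $X_3$-part $2\tau X_3$ of a bracket is paired against an $X_3$-argument in (\ref{a21}) (producing a factor $L$), and once through the $\frac{1}{L}$ in the reassembly formula when the covariant derivative has a nonzero $X_3$-component. Depending on which slot the index $3$ occupies in $\nabla^L_{X_a}X_b$, these two effects either cancel, leaving a bare $\tau$ (as in $\nabla^L_{X_1}X_2=-\frac{\lambda}{2}x_2X_1+\tau X_3$), or only one of them occurs, leaving a factor $L$ (as in $\nabla^L_{X_1}X_3=-\tau LX_2$ and $\nabla^L_{X_2}X_3=\tau LX_1$). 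As final consistency checks I would verify $\nabla^L_{X_i}X_j-\nabla^L_{X_j}X_i=[X_i,X_j]$ and $0=X_i\langle X_j,X_k\rangle_L=\langle\nabla^L_{X_i}X_j,X_k\rangle_L+\langle X_j,\nabla^L_{X_i}X_k\rangle_L$ against the table, both of which hold; this completes the proof.
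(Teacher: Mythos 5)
Your proposal is correct and follows exactly the route the paper intends: the paper derives Lemma \ref{lemma1} directly from the brackets (\ref{a20}) and the Koszul formula (\ref{a21}) with constant frame inner products, which is precisely your computation (including the careful handling of $\langle X_3,X_3\rangle_L=L$ in the reassembly). No discrepancies; your version is in fact more detailed than the paper's, which omits the calculation entirely.
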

We recall the twisted product and its properties.
\begin{defn}
A twisted product $(M,g)$ is a product manifold of form $M=B\times_{f}F$ with the metric $g=g_B\oplus f^2g_F$, where $f:~B\times F\rightarrow (0,\infty)$ is smooth.\\
\indent Here, $(B,g_B)$ is called the base manifold and $(F,g_F)$ is called as the fiber manifold and $f$ is called as the twisted function.
\end{defn}
\begin{prop}\label{prop2}
Let $M=B\times_{f}F$ be a twisted product and let $X,Y\in \Gamma(TB)$ and $U,W\in \Gamma(TF)$. Then\\
      \noindent$(1)~~\nabla_XY=\nabla^B_XY;$\\
      \noindent$(2)~~\nabla_XU=\nabla_UX=\frac{X(f)}{f}U;$\\
      \noindent$(3)~~\nabla_UW=U({\rm ln}f)W+W({\rm ln}f)U-\frac{g_{F}(U,W)}{f}{\rm
      grad}_{F}f-fg_{F}(U,W){\rm
      grad}_{B}f+\nabla^{F}_UW.$
\end{prop}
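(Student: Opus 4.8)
The plan is to prove all three identities by a direct application of the Koszul formula
$$2\langle \nabla_A B, C\rangle = A\langle B,C\rangle + B\langle A,C\rangle - C\langle A,B\rangle + \langle[A,B],C\rangle - \langle[A,C],B\rangle - \langle[B,C],A\rangle,$$
testing the outcome against vector fields lifted from the base $B$ and against vector fields lifted from the fiber $F$. Throughout I would work with horizontal lifts of $X,Y,Z\in\Gamma(TB)$ and vertical lifts of $U,V,W\in\Gamma(TF)$, and use the three standard bracket facts for such lifts on a product manifold: $[X,Y]$ is the lift of the bracket on $B$, $[U,W]$ is the lift of the bracket on $F$, and $[X,U]=0$. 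The metric data I would invoke repeatedly are $\langle X,Y\rangle = g_B(X,Y)$, which is a function pulled back from $B$ and hence annihilated by every vertical field; $\langle X,U\rangle = 0$; and $\langle U,W\rangle = f^2 g_F(U,W)$, where $g_F(U,W)$ is pulled back from $F$ and hence annihilated by every horizontal field, while the twisted function $f$ genuinely depends on both factors.

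For $(1)$ I would compute $2\langle\nabla_X Y,Z\rangle$ and $2\langle\nabla_X Y,W\rangle$ separately. In the first case every term on the right is exactly a term of the Koszul formula on $(B,g_B)$, so the horizontal part of $\nabla_X Y$ is $\nabla^B_X Y$; in the second case all six terms vanish (the three derivative terms because $\langle Y,W\rangle=\langle X,W\rangle=0$ and $\langle X,Y\rangle$ is constant along $F$, and the three bracket terms because $[X,Y]$ is horizontal and $[X,W]=[Y,W]=0$), so the vertical part vanishes. For $(2)$, torsion-freeness of $\nabla$ together with $[X,U]=0$ gives $\nabla_X U = \nabla_U X$ at once; then testing against $Y\in\Gamma(TB)$ makes all six Koszul terms vanish, while testing against $W\in\Gamma(TF)$ leaves only $X\langle U,W\rangle = 2f\,X(f)\,g_F(U,W) = 2\tfrac{X(f)}{f}\langle U,W\rangle$, which yields $\nabla_X U = \tfrac{X(f)}{f}U$.

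For $(3)$, computing $2\langle\nabla_U W, X\rangle$ collapses to the single surviving term $-X\langle U,W\rangle = -2f\,X(f)\,g_F(U,W)$, which I would identify with $2\langle -f g_F(U,W)\,{\rm grad}_{B}f,\,X\rangle$ using the defining property $g_B({\rm grad}_{B}f,X)=X(f)$ of the horizontal gradient. Computing $2\langle\nabla_U W, V\rangle$ is the only step producing several surviving terms: the terms carrying the factor $f^2$ reassemble, by the Koszul formula on $(F,g_F)$, into $2f^2 g_F(\nabla^F_U W, V)$, while the remaining terms combine to $2f\bigl(U(f)g_F(W,V)+W(f)g_F(U,V)-V(f)g_F(U,W)\bigr)$; rewriting $U(f)=f\,U({\rm ln}f)$ and so on, and using $g_F({\rm grad}_{F}f,V)=V(f)$, matches this against $2\langle U({\rm ln}f)W + W({\rm ln}f)U - \tfrac{g_F(U,W)}{f}{\rm grad}_{F}f + \nabla^F_U W,\,V\rangle$, the horizontal-gradient term contributing nothing when paired with $V$. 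Assembling the horizontal and vertical components produces the stated expression for $\nabla_U W$.

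The only real care needed anywhere is the bookkeeping of which functions are pulled back from which factor — in particular that a horizontal field $X$ annihilates $g_F(U,W)$ but not $f$, and symmetrically that a vertical field annihilates $g_B(X,Y)$ but not $f$ — so that the ``cross'' derivative terms in the Koszul formula are correctly kept or discarded. I do not anticipate any genuine obstacle beyond this; the argument is a careful but routine double use of the Koszul formula, one copy on $M$ and, after sorting terms, one copy on each of $B$ and $F$.
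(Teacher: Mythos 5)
Your proof is correct: the Koszul-formula computation, tested separately against horizontal and vertical lifts with the bracket facts $[X,U]=0$ and the bookkeeping of which inner products are pulled back from which factor, verifies all three identities, and the identification of the surviving cross terms with the two gradient terms (via $g_B({\rm grad}_Bf,X)=X(f)$ and $g_F({\rm grad}_Ff,V)=V(f)$) is exactly right. Note that the paper states this proposition without any proof, treating it as a standard fact about twisted products (it is used there only as input to the proof of the dual-connection formulas in Proposition 2.4); the argument you give is the standard derivation one would supply, and I see no gap in it.
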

\begin{prop}\label{prop3}
Let $M=B\times_{f}F$ be a twisted product and let $\omega_1\in \Gamma(T^*B)$, $\omega_2\in \Gamma(T^*F)$ and $X\in \Gamma(TB)$, $U\in \Gamma(TF)$. Let $\nabla^*$ be dual connection on $\Gamma(T^*M)$. Then\\
      \noindent$(1)~~\nabla_X^*\omega_1=\nabla^{B,*}_X\omega_1;$\\
      \noindent$(2)~~\nabla_X^*\omega_2=-\frac{X(f)}{f}\omega_2;$\\
      \noindent$(3)~~\nabla_U^*\omega_1=f(\omega_1,{\rm grad}_{B}f)U^*;$\\
      \noindent$(4)~~\nabla_U^*\omega_2=-U({\rm ln}f)\omega_2-(\omega_2,U)({\rm grad}_{F}({\rm ln}f))^*-\frac{(\omega_2,U)}{f}({\rm grad}_{B}f)^*+\frac{1}{f}(\omega_2,{\rm grad}_{F}f)U^*+\nabla^{F,*}_U\omega_2.$
\end{prop}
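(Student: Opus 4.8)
\emph{Proof proposal.} The plan is to establish all four identities directly from the definition of the dual connection: for $Z\in\Gamma(TM)$, $\omega\in\Gamma(T^*M)$ and $Y\in\Gamma(TM)$,
\[
(\nabla_Z^*\omega)(Y)=Z\big(\omega(Y)\big)-\omega(\nabla_ZY),
\]
where $\nabla$ is the Levi-Civita connection of $g$ on $M$. Since $TM=TB\oplus TF$, it is enough to test each identity on $Y\in\Gamma(TB)$ and on $Y=W\in\Gamma(TF)$, and I will throughout use that a form or vector field lifted from one factor pairs to a function that depends only on that factor's variables (so that, e.g., $X\in\Gamma(TB)$ annihilates $\omega_2(W)$ and $U\in\Gamma(TF)$ annihilates $\omega_1(Y)$). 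In each evaluation I substitute the Levi-Civita formulas of Proposition~\ref{prop2} for $\nabla_ZY$ and collect terms.

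Parts (1) and (2) are then immediate. Testing $\nabla_X^*\omega_1$ on $Y\in\Gamma(TB)$ gives $X(\omega_1(Y))-\omega_1(\nabla^B_XY)=(\nabla^{B,*}_X\omega_1)(Y)$ by Proposition~\ref{prop2}(1), while on $U\in\Gamma(TF)$ it gives $0$ since $\omega_1$ kills $TF$ and $\nabla_XU=\frac{X(f)}{f}U\in\Gamma(TF)$ by Proposition~\ref{prop2}(2). Similarly $\nabla_X^*\omega_2$ vanishes on $\Gamma(TB)$, and on $W\in\Gamma(TF)$ equals $X(\omega_2(W))-\frac{X(f)}{f}\omega_2(W)=-\frac{X(f)}{f}\omega_2(W)$. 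For (3), the evaluation on $\Gamma(TB)$ is $0$ on both sides (using $\nabla_UY=\nabla_YU=\frac{Y(f)}{f}U$ and that $\omega_1$ kills $TF$), and on $W\in\Gamma(TF)$ only the $-fg_F(U,W)\,\mathrm{grad}_Bf$ summand of $\nabla_UW$ from Proposition~\ref{prop2}(3) survives the pairing with $\omega_1$, producing $f\,g_F(U,W)(\omega_1,\mathrm{grad}_Bf)$, which matches $f(\omega_1,\mathrm{grad}_Bf)\,U^*(W)$.

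Part (4) is the substantial one. On $Y\in\Gamma(TB)$ one gets $-\omega_2\big(\tfrac{Y(f)}{f}U\big)=-\tfrac{Y(f)}{f}(\omega_2,U)$, matching the third term of the right-hand side. On $W\in\Gamma(TF)$ I expand $\nabla_UW$ via Proposition~\ref{prop2}(3), discard the $\mathrm{grad}_Bf$ term (annihilated by $\omega_2$), and then invoke the duality relation on the fiber itself, $U(\omega_2(W))=(\nabla^{F,*}_U\omega_2)(W)+\omega_2(\nabla^F_UW)$, to cancel the $\omega_2(\nabla^F_UW)$ contributions and isolate $\nabla^{F,*}_U\omega_2$; what remains is exactly $-U(\ln f)(\omega_2,W)-W(\ln f)(\omega_2,U)+\tfrac{1}{f}g_F(U,W)(\omega_2,\mathrm{grad}_Ff)$, matching the first, second and fourth terms of the right-hand side, while the third term there contributes $0$ on $\Gamma(TF)$. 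I expect the only real difficulty to be bookkeeping: keeping the factor-by-factor evaluations consistent, applying the lift convention correctly whenever a vector field acts on a pairing function, and recognizing the fiber-level duality identity as the mechanism that produces the $\nabla^{F,*}_U\omega_2$ term.
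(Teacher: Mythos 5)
Your proof is correct and follows the same route as the paper: unwind the definition of the dual connection, test separately against vector fields tangent to $B$ and to $F$, and substitute the covariant-derivative formulas of Proposition~\ref{prop2}. The paper only writes out part (1) and declares the rest ``similar,'' so your explicit treatment of (2)--(4), including the fiber-level duality step that isolates $\nabla^{F,*}_U\omega_2$ in (4), is a faithful (and more complete) version of the intended argument.
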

\begin{proof} Let $\omega_1\in \Gamma(T^*B)$, $\omega_2\in \Gamma(T^*F)$ and $X,Y\in \Gamma(TB)$, $U,W\in \Gamma(TF)$. Then by the definition of the dual connection, we have
\begin{align*}
(\nabla_X^*\omega_1,Y)+(\omega_1,\nabla_XY)=X(\omega_1,Y),
\end{align*}
and
\begin{align}\label{a32}
(\nabla_X^{B,*}\omega_1,Y)+(\omega_1,\nabla_X^BY)=X(\omega_1,Y).
\end{align}
Also, we have
\begin{equation*}
(\nabla_X^*\omega_1,U)+(\omega_1,\nabla_XU)=X(\omega_1,U).
\end{equation*}
By Proposition \ref{prop2}, we have
\begin{equation}\label{a33}
(\nabla_X^*\omega_1,U)+(\omega_1,\frac{X(f)}{f}U)=X(\omega_1,U).
\end{equation}
Then by (\ref{a32}) and (\ref{a33}), we get
\begin{equation}
\nabla_X^*\omega_1=\nabla^{B,*}_X\omega_1.
\end{equation}
Similarly, we can prove other equalities in Proposition \ref{prop3}.
\end{proof}
    Define the curvature, Ricci curvature and scalar curvature as
    follows:
    $$R(X,Y)Z=\nabla_X\nabla_Y-\nabla_Y\nabla_X-\nabla_{[X,Y]},$$
    $$ ~{\rm
    Ric}(X,Y)=\sum_k<R(X,E_k)Y,E_k>, ~~S=\sum_k{\rm
    Ric}(E_k,E_k),$$ where $E_k$ is a orthonormal base of $M$ with
    $<E_k,E_k>=1.$ The Hessian of $f$ is defined by
    $$H^f(X,Y)=XYf-(\nabla_XY)f.$$
\begin{prop}
Let $M=B\times_{f}F$ be a twisted product and let $X,Y,Z\in \Gamma(TB)$
      and $U,V,W\in \Gamma(TF)$. Then\\
\noindent $(1)R(X,Y)Z=R^B(X,Y)Z;$\\
\noindent $(2)R(V,X)Y=-\frac{H^{f}_B(X,Y)}{f}V;$\\
\noindent $(3)R(X,Y)V=0;$\\
\noindent $(4)R(V,W)X=VX({\rm ln}f)W-WX({\rm ln}f)V;$\\
\noindent $(5)R(X,V)W=-\frac{g(V,W)}{f}\nabla_X^B({\rm
grad}_Bf)+[WX({\rm ln}f)]V-g_{F}(W,V){\rm grad}_{F}(X({\rm
ln}f));$\\
\noindent $(6)R(V,W)U=g(V,U){\rm grad}_B(W({\rm ln}f))-g(W,U){\rm
grad}_B(V({\rm ln}f))+R^{F}(V,W)U
-\frac{|{\rm grad}_Bf|^2_B}{f^2}(g(W,U)V-g(V,U)W).$
\end{prop}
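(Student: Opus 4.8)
The plan is to substitute the Levi-Civita connection formulas of Proposition~\ref{prop2} into the defining expression
$$R(Y_1,Y_2)Y_3=\nabla_{Y_1}\nabla_{Y_2}Y_3-\nabla_{Y_2}\nabla_{Y_1}Y_3-\nabla_{[Y_1,Y_2]}Y_3$$
and simplify, using throughout that for (lifts of) $X,Y\in\Gamma(TB)$ and $V,W\in\Gamma(TF)$ one has $[X,V]=0$, $[X,Y]\in\Gamma(TB)$ and $[V,W]\in\Gamma(TF)$, together with the elementary identity $X\!\left(\tfrac{Yf}{f}\right)=\tfrac{XYf}{f}-\tfrac{(Xf)(Yf)}{f^2}$ and the definition $H^f_B(X,Y)=XYf-(\nabla^B_XY)f$.

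For (1) I would note that $\nabla_XY=\nabla^B_XY$ and $[X,Y]$ both remain in $\Gamma(TB)$, on which $\nabla$ restricts to $\nabla^B$; hence the whole expression is term-by-term the defining formula for $R^B(X,Y)Z$. For (2) and (3) only part (2) of Proposition~\ref{prop2} is needed. In (2), since $[V,X]=0$, I would write $R(V,X)Y=\nabla_V(\nabla^B_XY)-\nabla_X\!\left(\tfrac{(Yf)}{f}V\right)$, expand the first term via $\nabla_V Z=\tfrac{(Zf)}{f}V$ for $Z\in\Gamma(TB)$ and the second by the Leibniz rule together with $\nabla_X V=\tfrac{(Xf)}{f}V$; the elementary identity then collapses the coefficient of $V$ to $-\tfrac{H^f_B(X,Y)}{f}$. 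In (3) the analogous expansion of $\nabla_X\nabla_YV-\nabla_Y\nabla_XV-\nabla_{[X,Y]}V$ makes the terms quadratic in $f$ cancel, leaving the coefficient $X\!\left(\tfrac{Yf}{f}\right)-Y\!\left(\tfrac{Xf}{f}\right)-\tfrac{[X,Y]f}{f}=0$.

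For (4), (5) and (6) the full formula of part (3) of Proposition~\ref{prop2} enters, and I would first record the auxiliary formulas (again derived from Proposition~\ref{prop2}) for $\nabla^B_X({\rm grad}_Bf)$, $\nabla_V({\rm grad}_Bf)$, $\nabla_X({\rm grad}_Ff)$ and $\nabla_V({\rm grad}_Ff)$ so that the subsequent substitutions become mechanical. In (4), expanding $\nabla_V\nabla_WX-\nabla_W\nabla_VX-\nabla_{[V,W]}X$, the ${\rm grad}_Bf$- and ${\rm grad}_Ff$-terms are symmetric in $V,W$ and cancel, the $\nabla^F$-terms combine with $\nabla_{[V,W]}X$ to zero, and what survives is $V(X\ln f)W-W(X\ln f)V$. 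In (5) and (6) the same mechanism applies but now ${\rm grad}_Bf$ and ${\rm grad}_Ff$ must themselves be differentiated: in (5) this produces the $-\tfrac{g(V,W)}{f}\nabla^B_X({\rm grad}_Bf)$ term, the mixed base/fiber terms give $[WX(\ln f)]V-g_F(W,V){\rm grad}_F(X(\ln f))$, and the remainder cancels; in (6) the $\nabla^F\nabla^F$-contributions assemble into $R^F(V,W)U$, the cross terms give $g(V,U){\rm grad}_B(W\ln f)-g(W,U){\rm grad}_B(V\ln f)$, and composing the term $-fg_F(U,W){\rm grad}_Bf$ with the rule $\nabla_V(\cdot)=\tfrac{(\cdot)f}{f}V$ on base vectors produces the $\tfrac{|{\rm grad}_Bf|^2_B}{f^2}$ correction.

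The hard part will be the bookkeeping in (5) and (6): roughly a dozen terms must be sorted, some combining into Hessians or into gradients of composite functions such as $X(\ln f)$, others cancelling by symmetry in the fiber entries, and one must keep in mind that in the twisted (as opposed to warped) case ${\rm grad}_Bf$ and ${\rm grad}_Ff$ are genuine vector fields on $M$, so their derivatives in the complementary directions do not vanish. To keep this under control I would evaluate both sides against a $g$-orthonormal frame adapted to the splitting $TM=TB\oplus TF$ and verify the identities component by component.
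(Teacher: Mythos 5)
The paper states this proposition without any proof (it is quoted as a standard twisted-product curvature result; only the dual-connection Proposition~\ref{prop3} is proved in the text), so there is no argument of the authors' to compare against. Your plan --- expanding $R=\nabla\nabla-\nabla\nabla-\nabla_{[\cdot,\cdot]}$ using Proposition~\ref{prop2} and the vanishing of mixed brackets of lifts --- is the standard and correct route: I checked that it reproduces (2), (3), (4) exactly as you describe, and that in (5) the cancellations you predict do occur, with the term $-\tfrac{g(V,W)}{f}\nabla^B_X({\rm grad}_Bf)$ and the term $-g_F(V,W)\,{\rm grad}_F(X\ln f)$ arising precisely from the point you flag, namely that ${\rm grad}_Bf$ and ${\rm grad}_Ff$ have coefficients depending on both factors in the twisted case. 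What you have written is an outline rather than a finished computation (in particular the dozen-term bookkeeping in (5) and (6) is asserted rather than displayed), but the mechanism is right and carrying it out in an adapted frame as you propose would yield a complete proof.
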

\begin{prop}\label{prop1}
Let $M=B\times_{f}F$ be a twisted product and let $X,Y\in \Gamma(TB)$
      and $V,W\in \Gamma(TF)$. Then\\
\noindent $(1) {\rm Ric} (X,Y)={\rm
Ric}^B(X,Y)+\frac{\widetilde{l}}{f}H_B^{f}(X,Y);$\\
\noindent $(2) {\rm Ric} (X,V)={\rm Ric}
(V,X)=(\widetilde{l}-1)[VX({\rm ln}f)];$\\
\noindent $(3) {\rm Ric} (V,W)={\rm Ric}^{F}
(V,W)+\left[\frac{\triangle_Bf}{f}+(\widetilde{l}-1)\frac{|{\rm
grad}_Bf|^2_B}{f^2}\right]g(V,W),$\\
\noindent where $\widetilde{l}$ is the sum of the dimensions of $B$ and $F$.
\end{prop}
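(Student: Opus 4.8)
The plan is to obtain each of the three formulas by tracing the curvature identities of the preceding proposition against a $g$-orthonormal frame adapted to the product. Concretely, I would fix a $g_B$-orthonormal local frame $\{E_i\}$ on $B$ and a $g_F$-orthonormal local frame $\{F_a\}$ on $F$, and observe that $\{E_i\}\cup\{\widetilde F_a:=f^{-1}F_a\}$ is then $g$-orthonormal on $M$, since $g(F_a,F_a)=f^2g_F(F_a,F_a)=f^2$. The recurring ingredients are that horizontal and vertical vectors are $g$-orthogonal, that $g$ restricts to $g_B$ on $\Gamma(TB)$ and to $f^2g_F$ on $\Gamma(TF)$, and the completeness relations writing the horizontal part of any $Z$ as $\sum_i g(Z,E_i)E_i$ and its vertical part as $\sum_a g(Z,\widetilde F_a)\widetilde F_a$.

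For (1) I would write ${\rm Ric}(X,Y)=\sum_i\langle R(X,E_i)Y,E_i\rangle+\sum_a\langle R(X,\widetilde F_a)Y,\widetilde F_a\rangle$; by part (1) of the curvature proposition together with $g|_{\Gamma(TB)}=g_B$ the first sum is ${\rm Ric}^B(X,Y)$, while by the antisymmetry of $R$ and part (2) one has $R(X,\widetilde F_a)Y=\frac{H_B^f(X,Y)}{f}\widetilde F_a$, so each term of the second sum contributes $\frac{H_B^f(X,Y)}{f}$ and adding them over the fiber frame yields the coefficient $\widetilde l$. For (2) the same splitting of ${\rm Ric}(X,V)$ makes the base part vanish by part (3), since $R(X,E_i)V=0$; expanding $R(X,\widetilde F_a)V$ by part (5), the summand along $\nabla_X^B({\rm grad}_Bf)$ is horizontal and pairs to zero with $\widetilde F_a$, the summand $[VX({\rm ln}f)]\widetilde F_a$ contributes $VX({\rm ln}f)$ per index, and the ${\rm grad}_F(X({\rm ln}f))$ summand, after using $g(\,\cdot\,,\widetilde F_a)=fg_F(\,\cdot\,,F_a)$ and $\sum_a g_F(V,F_a)F_a(\,\cdot\,)=V(\,\cdot\,)$, collapses to $-VX({\rm ln}f)$ altogether, so that the total is $(\widetilde l-1)[VX({\rm ln}f)]$; the symmetry of ${\rm Ric}$ then gives the $(V,X)$ version.

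For (3) I would split ${\rm Ric}(V,W)=\sum_i\langle R(V,E_i)W,E_i\rangle+\sum_a\langle R(V,\widetilde F_a)W,\widetilde F_a\rangle$. In the base sum, using $R(V,E_i)W=-R(E_i,V)W$ and part (5), only the $-\frac{g(V,W)}{f}\nabla_{E_i}^B({\rm grad}_Bf)$ term survives the pairing with $E_i$, and $\sum_i g_B(\nabla_{E_i}^B{\rm grad}_Bf,E_i)=\triangle_Bf$ gives $\frac{\triangle_Bf}{f}g(V,W)$. In the fiber sum, using part (6), the two ${\rm grad}_B(\,\cdot\,)$ terms are horizontal and drop against $\widetilde F_a$, the $R^F(V,\widetilde F_a)W$ term contracts to ${\rm Ric}^F(V,W)$ once the factor $f^2$ from $g=f^2g_F$ on $\Gamma(TF)$ is balanced against the factors $f^{-1}$ hidden in $\widetilde F_a=f^{-1}F_a$, and the $\frac{|{\rm grad}_Bf|^2_B}{f^2}$ term, via the vertical completeness relation, contributes $(\widetilde l-1)\frac{|{\rm grad}_Bf|^2_B}{f^2}g(V,W)$; summing the two sums gives (3).

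I expect the only real obstacle to be the bookkeeping in formula (3): one has to keep track of the rescaling $\widetilde F_a=f^{-1}F_a$ and of the $f^2$ relating $g$ to $g_F$ on vertical vectors at once, so that the fiber-curvature contraction produces exactly ${\rm Ric}^F(V,W)$ rather than an $f$-weighted version of it, and so that $\triangle_Bf$ and $|{\rm grad}_Bf|^2_B$ appear with the correct powers of $f$. Formulas (1) and (2) are routine contractions by comparison.
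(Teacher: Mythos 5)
The paper states this proposition without proof, so there is no argument of the authors' to compare yours against; your strategy --- contracting the curvature identities of the preceding proposition against the adapted $g$-orthonormal frame $\{E_i\}\cup\{f^{-1}F_a\}$ --- is the standard and correct route, and the delicate points you identify are handled properly: the horizontal terms drop out of the fiber traces, the $f^{2}$ in $g=g_B\oplus f^{2}g_F$ cancels against the two factors of $f^{-1}$ carried by the rescaled fiber frame so that the $R^{F}$-contraction yields exactly ${\rm Ric}^{F}(V,W)$, and the completeness relation collapses the ${\rm grad}_{F}(X(\ln f))$ term to $-VX(\ln f)$.

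One thing you should not gloss over: each of your fiber-frame traces contributes one identical term per element of the fiber frame, so the coefficients your computation actually produces are $\dim F$ in (1) and $\dim F-1$ in (2) and (3) --- not $\widetilde{l}$ and $\widetilde{l}-1$ with $\widetilde{l}=\dim B+\dim F$ as the proposition's last sentence defines it. Your write-up silently substitutes $\widetilde{l}$ for $\dim F$ at the final step of each part. The $\dim F$ versions are the ones in the standard twisted-product literature (cf.\ \cite{FGKU}), so the paper's gloss on $\widetilde{l}$ is presumably a misstatement of ``the dimension of $F$''; but as written your proof establishes the statement only under that reading, and you should say so explicitly rather than let the discrepancy pass. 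A smaller convention point: in (3) you use $\sum_i g_B(\nabla^{B}_{E_i}{\rm grad}_Bf,E_i)=\triangle_Bf$, i.e.\ the analyst's sign $\triangle_B={\rm div}\,{\rm grad}_B$, whereas the $\Delta$ defined later in the paper is $-\sum_j e_je_j+\sum_j\nabla_{e_j}e_j$, the opposite sign; you should state which convention makes the term $+\triangle_Bf/f$ in (3) come out with the claimed sign.
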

\begin{prop}
Let $M=B\times_{f}F$ be a twisted product, then the scalar
    curvature $S$ has the following expression:\\
\begin{equation}\label{a1}
S=S^B+2\frac{\widetilde{l}}{f}\triangle_Bf+\frac{S^{F}}{f^2}+\widetilde{l}(\widetilde{l}-1)\frac{|{\rm grad}_Bf|^2_B}{f^2}.
\end{equation}
\end{prop}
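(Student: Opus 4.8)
The plan is to obtain $S$ as the trace of the Ricci tensor of Proposition \ref{prop1} in an orthonormal frame adapted to the twisted product structure. Concretely, I would take a local $g_B$-orthonormal frame $\{e_1,\dots,e_p\}$ of $TB$ and a local $g_F$-orthonormal frame $\{\widetilde e_1,\dots,\widetilde e_{\widetilde l}\}$ of $TF$, where $\widetilde l$ is the fiber dimension appearing in Proposition \ref{prop1}. Since the metric restricted to the fiber directions is $f^2g_F$, the family $\{e_1,\dots,e_p,f^{-1}\widetilde e_1,\dots,f^{-1}\widetilde e_{\widetilde l}\}$ is $g$-orthonormal, so by the definition $S=\sum_k{\rm Ric}(E_k,E_k)$ and bilinearity of ${\rm Ric}$ (which pulls out the factor $f^{-2}$ from the fiber block),
\begin{equation}
S=\sum_{i=1}^{p}{\rm Ric}(e_i,e_i)+\frac{1}{f^2}\sum_{a=1}^{\widetilde l}{\rm Ric}(\widetilde e_a,\widetilde e_a).
\end{equation}
The mixed terms ${\rm Ric}(e_i,\widetilde e_a)$ of Proposition \ref{prop1}(2) never enter, because only the diagonal of the Ricci tensor contributes to the scalar curvature in this adapted frame.

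Next I would evaluate the two sums separately with Proposition \ref{prop1}. By part (1), $\sum_i{\rm Ric}(e_i,e_i)=S^B+\frac{\widetilde l}{f}\sum_iH_B^f(e_i,e_i)=S^B+\frac{\widetilde l}{f}\triangle_Bf$, since the trace of the Hessian $H_B^f$ over a $g_B$-orthonormal frame is precisely $\triangle_Bf$. By part (3),
\begin{equation}
\sum_{a=1}^{\widetilde l}{\rm Ric}(\widetilde e_a,\widetilde e_a)=S^F+\left[\frac{\triangle_Bf}{f}+(\widetilde l-1)\frac{|{\rm grad}_Bf|_B^2}{f^2}\right]\sum_{a=1}^{\widetilde l}g(\widetilde e_a,\widetilde e_a),
\end{equation}
where $\sum_a{\rm Ric}^F(\widetilde e_a,\widetilde e_a)=S^F$ because $\{\widetilde e_a\}$ is $g_F$-orthonormal. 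The crucial bookkeeping point is that $g(\widetilde e_a,\widetilde e_a)=f^2g_F(\widetilde e_a,\widetilde e_a)=f^2$, so that $\sum_{a}g(\widetilde e_a,\widetilde e_a)=\widetilde l f^2$, and the $f^2$ here cancels the global $f^{-2}$ in front of the fiber sum.

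Substituting back and collecting terms gives
\begin{equation}
S=S^B+\frac{\widetilde l}{f}\triangle_Bf+\frac{S^F}{f^2}+\frac{\widetilde l}{f}\triangle_Bf+\widetilde l(\widetilde l-1)\frac{|{\rm grad}_Bf|_B^2}{f^2},
\end{equation}
and adding the two identical terms $\frac{\widetilde l}{f}\triangle_Bf$ produces exactly formula (\ref{a1}). I expect no genuine obstacle beyond this bookkeeping: the one thing requiring care is using the rescaled fiber frame $f^{-1}\widetilde e_a$ consistently, both in the trace defining $S$ and when evaluating $g(\widetilde e_a,\widetilde e_a)$, together with fixing the normalization of $\triangle_B$ (trace of the Hessian) to match the one implicit in Proposition \ref{prop1}. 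Alternatively, one could contract the full curvature tensor of the preceding proposition directly, but routing through Proposition \ref{prop1} is shorter and avoids recomputing the intermediate traces.
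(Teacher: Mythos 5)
Your derivation is correct and is the natural one: the paper states this proposition without proof, and tracing the Ricci tensor of Proposition \ref{prop1} over the adapted orthonormal frame $\{e_i,\,f^{-1}\widetilde e_a\}$ --- with the factor $f^{-2}$ from the rescaled fiber frame cancelling against $g(\widetilde e_a,\widetilde e_a)=f^2$, the mixed terms dropping out, and the two copies of $\frac{\widetilde l}{f}\triangle_Bf$ combining --- is exactly how (\ref{a1}) follows. Two remarks concern conventions rather than your argument. First, your computation forces $\widetilde l=\dim F$, since you sum part (3) of Proposition \ref{prop1} over $\widetilde l$ fiber directions; the paper instead declares $\widetilde l$ to be ``the sum of the dimensions of $B$ and $F$.'' Formula (\ref{a1}) is consistent with Proposition \ref{prop1} only under your reading, which also matches the classical warped-product scalar curvature formula, so the paper's stated definition of $\widetilde l$ appears to be a misstatement and your choice is the right one. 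Second, you correctly flag that $\triangle_B$ must be normalized as the trace of $H_B^f$ for the signs in (\ref{a1}) to come out; this is the opposite sign from the Laplacian $\Delta=-\sum_je_je_j+\sum_j\nabla_{e_j}e_j$ that the paper adopts in Section 4, so the convention is not uniform across the paper, but within Proposition \ref{prop1} and (\ref{a1}) your choice is the internally consistent one.
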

\indent We will say that a surface $\Sigma\subset(N,g_L)$ is regular if $\Sigma$ is a Euclidean $C^2$-smooth compact and oriented surface. In particular we will assume that there exists
a Euclidean $C^2$-smooth function $u:N\rightarrow \mathbb{R}$ such that
$$\Sigma=\{(x_1,x_2,x_3)\in N:u(x_1,x_2,x_3)=0\}$$
and $$u_{x_1}\partial_{x_1}+u_{x_2}\partial_{x_2}+u_{x_3}\partial_{x_3}\neq 0.$$ Let $$\nabla_Hu=X_1(u)X_1+X_2(u)X_2.$$
 A point $x\in\Sigma$ is called {\it characteristic} if $\nabla_Hu(x)=0$.
 We define the characteristic set $$C(\Sigma):=\{x\in\Sigma|\nabla_Hu(x)=0\}.$$
  Our computations will
be local and away from characteristic points of $\Sigma$. Let us define first
$$p:=X_1u,~~q:=X_2u ,~~{\rm and}~~r:=\widetilde{X}_3u.$$
We then define
\begin{align}
&l:=\sqrt{p^2+q^2},~~~~l_L:=\sqrt{p^2+q^2+r^2},~~~~\overline{p}:=\frac{p}{l},\nonumber\\
&\overline{q}:=\frac{q}{l},~~~~
\overline{p_L}:=\frac{p}{l_L},~~~~\overline{q_L}:=\frac{q}{l_L},~~~~\overline{r_L}:=\frac{r}{l_L}.
\end{align}
In particular, we have $$\overline{p}^2+\overline{q}^2=1.$$ These functions are well defined at every non-characteristic point. Let
\begin{align}\label{a23}
v_L=\overline{p_L}X_1+\overline{q_L}X_2+\overline{r_L}\widetilde{X_3},~~~~\overline{e}_1=\overline{q}X_1-\overline{p}X_2,~~~~
\overline{e}_2=\overline{r_L}~~\overline{p}X_1+\overline{r_L}~~ \overline{q}X_2-\frac{l}{l_L}\widetilde{X_3},
\end{align}
then $v_L$ is the Riemannian unit normal vector to $\Sigma$ and $\overline{e}_1,\overline{e}_2$ are the orthonormal basis of $\Sigma$. On $T\Sigma$ we define a linear transformation $J_L:T\Sigma\rightarrow T\Sigma$ such that
\begin{equation}
J_L(\overline{e}_1):=\overline{e}_2;~~~~J_L(\overline{e}_2):=-\overline{e}_1.
\end{equation}
In the following, we compute the sub-Riemannian limit of the Riemannian Gaussian curvature of surfaces in the BCV spaces. We define the {\it second fundamental form} $II^L$ of the
embedding of $\Sigma$ into $(N,g_L)$:
\begin{equation}
II^L=\left(
  \begin{array}{cc}
   \langle \nabla^{L}_{\overline{e}_1}v_L,\overline{e}_1\rangle_{L},
    & \langle \nabla^{L}_{\overline{e}_1}v_L,\overline{e}_2\rangle_{L} \\
   \langle \nabla^{L}_{\overline{e}_2}v_L,\overline{e}_1\rangle_{L},
    & \langle \nabla^{L}_{\overline{e}_2}v_L,\overline{e}_2\rangle_{L} \\
  \end{array}
\right).
\end{equation}
Similarly to Theorem 4.3 in \cite{CDPT}, we have
\begin{thm} The second fundamental form $II^L$ of the
embedding of $\Sigma$ into $(N,g_L)$ is given by
\begin{equation}\label{a24}
II^L=\left(
  \begin{array}{cc}
   h_{11}, & h_{12}\\
   h_{21}, & h_{22}\\
  \end{array}
\right),
\end{equation}
where $$h_{11}= \frac{l}{l_L}[X_1(\overline{p})+X_2(\overline{q})]-\frac{\lambda}{2}(\overline{p_L}x_1+\overline{q_L}x_2),$$
    $$h_{12}=h_{21}=-\frac{l_L}{l}\langle \overline{e}_1,\nabla_H(\overline{r_L})\rangle_L-\tau{\sqrt{L}},$$
    $$h_{22}=-\frac{l^2}{l_L^2}\langle \overline{e}_2,\nabla_H(\frac{r}{l})\rangle_L+\widetilde{X_3}(\overline{r_L})+(\overline{p_L}-\overline{q_L})
    \overline{p}~\overline{q}~\overline{r_L}^2 \frac{\lambda}{2}x_2.$$
\end{thm}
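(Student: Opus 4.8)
The plan is to compute the four matrix entries $h_{ij}=\langle\nabla^L_{\overline e_i}v_L,\overline e_j\rangle_L$, $i,j\in\{1,2\}$, directly, by expanding everything in the $g_L$-orthonormal frame $\{X_1,X_2,\widetilde X_3\}$. The first step is to rewrite Lemma~\ref{lemma1} in this frame: since $\widetilde X_3=L^{-1/2}X_3$, every connection coefficient involving $X_3$ gets rescaled, e.g. $\nabla^L_{X_1}X_2=-\tfrac{\lambda}{2}x_2X_1+\tau\sqrt L\,\widetilde X_3$, $\nabla^L_{X_1}\widetilde X_3=\nabla^L_{\widetilde X_3}X_1=-\tau\sqrt L\,X_2$, $\nabla^L_{X_2}\widetilde X_3=\nabla^L_{\widetilde X_3}X_2=\tau\sqrt L\,X_1$, $\nabla^L_{\widetilde X_3}\widetilde X_3=0$, while the $\nabla^L_{X_i}X_j$ with $i,j\in\{1,2\}$ are unchanged. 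This is where the constant $\sqrt L$ in $h_{12}$ will ultimately come from.

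Next, I would write $v_L=\overline{p_L}X_1+\overline{q_L}X_2+\overline{r_L}\widetilde X_3$ and apply the Leibniz rule: $\nabla^L_{\overline e_i}v_L$ is the sum of the ``coefficient part'' $\overline e_i(\overline{p_L})X_1+\overline e_i(\overline{q_L})X_2+\overline e_i(\overline{r_L})\widetilde X_3$ and the ``frame part'' $\overline{p_L}\nabla^L_{\overline e_i}X_1+\overline{q_L}\nabla^L_{\overline e_i}X_2+\overline{r_L}\nabla^L_{\overline e_i}\widetilde X_3$, where each $\nabla^L_{\overline e_i}X_k$ is obtained by expanding $\overline e_i$ in $\{X_1,X_2,\widetilde X_3\}$ and using the tabulated coefficients. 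Pairing with $\overline e_1=\overline q X_1-\overline p X_2$ and $\overline e_2=\overline{r_L}\,\overline p X_1+\overline{r_L}\,\overline q X_2-\tfrac{l}{l_L}\widetilde X_3$ via orthonormality produces the raw forms of $h_{11},h_{12},h_{21},h_{22}$. The identity $h_{12}=h_{21}$ needs no work: $\overline e_1,\overline e_2$ are tangent to the surface $\Sigma$, which is integrable, so $[\overline e_1,\overline e_2]\in T\Sigma$, and torsion-freeness of $\nabla^L$ together with $\langle v_L,\overline e_j\rangle_L=0$ gives $h_{12}-h_{21}=-\langle v_L,[\overline e_1,\overline e_2]\rangle_L=0$.

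The bulk of the proof is the algebraic simplification, and this is the step I expect to be the main obstacle. I would repeatedly invoke $\overline p^2+\overline q^2=1$, $\overline{p_L}^2+\overline{q_L}^2+\overline{r_L}^2=1$, $(l/l_L)^2+\overline{r_L}^2=1$, the relations $\overline{p_L}=\tfrac{l}{l_L}\overline p$, $\overline{q_L}=\tfrac{l}{l_L}\overline q$, $\overline{r_L}=r/l_L$, $r/l=\tfrac{l_L}{l}\overline{r_L}$, and the chain rule to turn $\overline e_i$-derivatives of $\overline{p_L},\overline{q_L}$ into $\overline e_i$-derivatives of $\overline p,\overline q$ and of $l/l_L$ (equivalently of $\overline{r_L}$). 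Collecting terms, the coefficient of $X_1(\overline p)+X_2(\overline q)$ in $h_{11}$ should reduce to $l/l_L$, the derivative terms in $h_{12}$ should assemble into $-\tfrac{l_L}{l}\langle\overline e_1,\nabla_H(\overline{r_L})\rangle_L$, and those in $h_{22}$ into $-\tfrac{l^2}{l_L^2}\langle\overline e_2,\nabla_H(r/l)\rangle_L+\widetilde X_3(\overline{r_L})$; meanwhile the $\lambda$-terms of the connection produce $-\tfrac{\lambda}{2}(\overline{p_L}x_1+\overline{q_L}x_2)$ in $h_{11}$ and $(\overline{p_L}-\overline{q_L})\overline p\,\overline q\,\overline{r_L}^2\tfrac{\lambda}{2}x_2$ in $h_{22}$, and the $\tau$-terms produce $-\tau\sqrt L$ in $h_{12}$. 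Following the organization of the proof of Theorem~4.3 in \cite{CDPT} for the Heisenberg case ($\lambda=0$, $L=1$), one can split each $h_{ij}$ into a ``Heisenberg part'' that reproduces the known formula and the extra $\lambda$- and $L$-dependent contributions, which keeps the bookkeeping of the many $\lambda x_1$, $\lambda x_2$, $\tau\sqrt L$ and $\overline{r_L}$-weighted terms manageable.
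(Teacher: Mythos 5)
Your proposal is correct and matches the paper's (implicit) approach: the paper states this theorem without proof, deferring to the analogous direct frame computation of Theorem 4.3 in \cite{CDPT}, which is exactly the calculation you outline. Your rescaled connection coefficients, the torsion-freeness/tangency argument for $h_{12}=h_{21}$, and the key algebraic reductions (e.g. $\overline{q}\,\overline{e}_1(\overline{p_L})-\overline{p}\,\overline{e}_1(\overline{q_L})=\frac{l}{l_L}\big[X_1(\overline{p})+X_2(\overline{q})\big]$ via $\overline{p}^2+\overline{q}^2=1$, and $\frac{l^2}{l_L^2}+\overline{r_L}^2=1$ producing the $-\tau\sqrt{L}$ term) all check out.
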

\indent The Riemannian mean curvature $\mathcal{H}_L$ of $\Sigma$ is defined by
$$\mathcal{H}_L:={\rm tr}(II^L).$$
Define the curvature of a connection $\nabla$ by
\begin{equation}\label{a22}
R(X,Y)Z=\nabla_X\nabla_Y-\nabla_Y\nabla_X-\nabla_{[X,Y]}.
\end{equation}
Let
\begin{equation}
\mathcal{K}^{\Sigma,L}(\overline{e}_1,\overline{e}_2)=-\langle R^{\Sigma,L}(\overline{e}_1,\overline{e}_2)\overline{e}_1,\overline{e}_2\rangle_{\Sigma,L},~~~~\mathcal{K}^{L}(\overline{e}_1,\overline{e}_2)=-\langle R^{L}(\overline{e}_1,\overline{e}_2)\overline{e}_1,\overline{e}_2\rangle_L.
\end{equation}
By the Gauss equation, we have
\begin{equation}\label{a25}
\mathcal{K}^{\Sigma,L}(\overline{e}_1,\overline{e}_2)=\mathcal{K}^{L}(\overline{e}_1,\overline{e}_2)+{\rm det}(II^L).
\end{equation}
\begin{prop} Away from characteristic points, the horizontal mean curvature $\mathcal{H}_{\infty}$ of $\Sigma\subset N$ is given by
\begin{equation}
\mathcal{H}_{\infty}={\rm lim}_{L\rightarrow +\infty}\mathbb{}\mathcal{H}_L=X_1(\overline{p})+X_2(\overline{q})-\frac{\lambda}{2}(\overline{p}x_1+\overline{q}x_2).
\end{equation}
\end{prop}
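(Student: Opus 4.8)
The plan is to start from the identity $\mathcal{H}_L=\mathrm{tr}(II^L)=h_{11}+h_{22}$, using the explicit entries of $II^L$ furnished by the Theorem above, and then to let $L\to+\infty$ term by term. The guiding observation is that $r=\widetilde{X_3}u=L^{-1/2}X_3u$, so $r=O(L^{-1/2})$; consequently $l_L=\sqrt{l^2+r^2}\to l$, $\overline{p_L}=p/l_L\to p/l=\overline{p}$, $\overline{q_L}=q/l_L\to\overline{q}$, and $\overline{r_L}=r/l_L=O(L^{-1/2})\to 0$. All of this is uniform on compact subsets of $\Sigma\setminus C(\Sigma)$, since there $l=\sqrt{p^2+q^2}>0$ is bounded below and the $C^2$ regularity of $u$ bounds all the first and second derivatives that occur below.

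For $h_{11}$ the passage to the limit is immediate: $X_1(\overline{p})+X_2(\overline{q})$, $x_1$, $x_2$ are independent of $L$, $l/l_L\to 1$, and $\overline{p_L},\overline{q_L}\to\overline{p},\overline{q}$, so
\begin{equation}
\lim_{L\to+\infty}h_{11}=X_1(\overline{p})+X_2(\overline{q})-\frac{\lambda}{2}(\overline{p}x_1+\overline{q}x_2).
\end{equation}
I then claim that $h_{22}\to 0$, checking its three terms separately. The last term contains the factor $\overline{r_L}^2=O(L^{-1})$. The middle term is $\widetilde{X_3}(\overline{r_L})=L^{-1/2}X_3(\overline{r_L})$; writing $\overline{r_L}=X_3u/\sqrt{Ll^2+(X_3u)^2}$ and differentiating gives $X_3(\overline{r_L})=O(L^{-1/2})$, hence $\widetilde{X_3}(\overline{r_L})=O(L^{-1})$. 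For the first term, $\nabla_H(r/l)=L^{-1/2}\big(X_1(X_3u/l)X_1+X_2(X_3u/l)X_2\big)$, and since $\overline{e}_2=\overline{r_L}\,\overline{p}\,X_1+\overline{r_L}\,\overline{q}\,X_2-\tfrac{l}{l_L}\widetilde{X_3}$ with $X_1,X_2,\widetilde{X_3}$ being $g_L$-orthonormal, one gets $\langle\overline{e}_2,\nabla_H(r/l)\rangle_L=L^{-1/2}\overline{r_L}\big(\overline{p}\,X_1(X_3u/l)+\overline{q}\,X_2(X_3u/l)\big)=O(L^{-1})$, which remains $O(L^{-1})$ after multiplication by $l^2/l_L^2\to 1$. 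Therefore $\lim_{L\to+\infty}h_{22}=0$, and adding the two limits yields the asserted formula for $\mathcal{H}_{\infty}$.

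The computation is otherwise mechanical, so the only genuine point requiring care — and the main obstacle — is the uniformity of these estimates: one must work on a fixed compact subset of $\Sigma\setminus C(\Sigma)$ where $l$ has a positive lower bound, so that the denominators $l_L$ and $D:=\sqrt{Ll^2+(X_3u)^2}$ stay bounded away from zero uniformly in $L$, and the $C^2$-bounds on $u$ turn every ``$O(L^{-k})$'' written above into an honest estimate. Granted this, the limits exist away from characteristic points and the proposition follows.
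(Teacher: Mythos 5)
Your proof is correct and follows exactly the route the paper intends (the Proposition is stated without proof, immediately after the formula for $II^L$): write $\mathcal{H}_L=h_{11}+h_{22}$, note that $r=L^{-1/2}X_3u=O(L^{-1/2})$ forces $l_L\to l$, $\overline{p_L}\to\overline{p}$, $\overline{q_L}\to\overline{q}$, $\overline{r_L}=O(L^{-1/2})$, so that $h_{11}$ converges to the stated expression and each of the three terms of $h_{22}$ is $O(L^{-1})$. Your added remark on uniformity on compact subsets of $\Sigma\setminus C(\Sigma)$ is a welcome precision the paper leaves implicit.
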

By Lemma \ref{lemma1} and (\ref{a22}), we have
\begin{lem}\label{lemma2}
Let $N$ be the BCV spaces, then
\begin{align}
&
R^L(X_1,X_2)X_1=-\frac{\lambda^2}{4}x^2_2X_1+(-\lambda A+3\tau^2L+\frac{\lambda^2}{4}x^2_1)X_2+\frac{\lambda \tau}{2}x_2X_3
,\nonumber\\
& R^L(X_1,X_2)X_2=(\lambda-3\tau^2L)X_1,
~~~ R^L(X_1,X_2)X_3=0,~~~\nonumber\\
&R^L(X_1,X_3)X_1=-\tau^2LX_3,~~~ R^L(X_1,X_3)X_2=0,
~~~ R^L(X_1,X_3)X_3=\tau^2L^2X_1,~~~\nonumber\\
& R^L(X_2,X_3)X_1=0,~~~
R^L(X_2,X_3)X_2=-\tau^2LX_3,~~~ R^L(X_2,X_3)X_3=\tau^2L^2X_2.
\end{align}
\end{lem}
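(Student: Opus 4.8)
The plan is to unwind the definition (\ref{a22}) term by term, substituting the connection coefficients of Lemma \ref{lemma1} and the brackets (\ref{a20}). By the antisymmetry $R^L(X_i,X_j)=-R^L(X_j,X_i)$ it suffices to treat the pairs $(X_1,X_2)$, $(X_1,X_3)$, $(X_2,X_3)$, each applied to $X_1,X_2,X_3$, i.e.\ nine evaluations of
$$R^L(X_i,X_j)X_k=\nabla^L_{X_i}\bigl(\nabla^L_{X_j}X_k\bigr)-\nabla^L_{X_j}\bigl(\nabla^L_{X_i}X_k\bigr)-\nabla^L_{[X_i,X_j]}X_k .$$
Since $[X_1,X_3]=[X_2,X_3]=0$, the last term is absent whenever an $X_3$ sits in the first two slots, and only $[X_1,X_2]=-\tfrac{\lambda}{2}x_2X_1+\tfrac{\lambda}{2}x_1X_2+2\tau X_3$ produces a nontrivial $\nabla^L_{[\cdot,\cdot]}$ contribution.

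The one step that is not purely mechanical is that $\nabla^L$ is not constant-coefficient in the frame $X_1,X_2,X_3$: the coefficients in Lemma \ref{lemma1} depend on $x_1,x_2$, so when a second covariant derivative is applied to an expression $\nabla^L_{X_j}X_k=\sum_m f_mX_m$ one must use the Leibniz rule $\nabla^L_{X_i}(f_mX_m)=X_i(f_m)X_m+f_m\nabla^L_{X_i}X_m$, where from the defining formulas for $X_1,X_2,X_3$ one has
$$X_1(x_1)=X_2(x_2)=1+\tfrac{\lambda}{4}(x_1^2+x_2^2),\qquad X_1(x_2)=X_2(x_1)=X_3(x_1)=X_3(x_2)=0 .$$
These derivative terms carry the factor $A=1+\tfrac{\lambda}{4}(x_1^2+x_2^2)$, and in the three $(X_1,X_2)$-cases they are precisely what accounts for the $A$-dependence in the statement once the genuinely quadratic-in-$x$ contributions coming from composing two connection coefficients and from $\nabla^L_{[X_1,X_2]}X_k$ are collected. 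In the six cases with $X_3$ in the first two slots the computation is short: $X_3$ annihilates $x_1,x_2$ and commutes with $X_1,X_2$, so the position-dependent terms that appear at intermediate steps cancel in pairs, leaving only the $x$-independent combinations built from $\nabla^L_{X_\bullet}X_3$ and $\nabla^L_{X_3}X_\bullet$; these yield the asserted multiples of $\tau^2L$ and $\tau^2L^2$.

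There is no conceptual content here---the whole proof is bookkeeping, and the hard part is simply not making an arithmetic slip. The two places where errors are most likely are exactly those isolated above: forgetting that $X_1$ and $X_2$ differentiate the position-dependent coefficients of Lemma \ref{lemma1}, and mishandling a sign or a scalar factor in $\nabla^L_{[X_1,X_2]}X_k$. Keeping the terms that carry $A$ separate from the quadratic terms throughout makes the required cancellations visible by inspection.
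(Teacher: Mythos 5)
Your plan is the same as the paper's: the paper proves the lemma precisely ``by Lemma \ref{lemma1} and (\ref{a22})'', and the two points you isolate --- the Leibniz rule acting on the position-dependent coefficients of Lemma \ref{lemma1}, with $X_1(x_1)=X_2(x_2)=A:=1+\tfrac{\lambda}{4}(x_1^2+x_2^2)$ and $X_1(x_2)=X_2(x_1)=X_3(x_1)=X_3(x_2)=0$, together with the $\nabla^L_{[X_1,X_2]}$ term --- are exactly the nontrivial ingredients. One caveat, though: if you actually execute the computation you describe, eight of the nine identities come out as printed, but the first does not. One finds
$$R^L(X_1,X_2)X_1=(3\tau^2L-\lambda)X_2 .$$
The $X_1$- and $X_3$-components must cancel, since for a Levi-Civita connection $\langle R^L(X_1,X_2)X_1,X_1\rangle_L=0$ and $\langle R^L(X_1,X_2)X_1,X_3\rangle_L=-\langle R^L(X_1,X_2)X_3,X_1\rangle_L=0$ once $R^L(X_1,X_2)X_3=0$ is known; and with $A$ as above the $X_2$-coefficient is $-\lambda A+3\tau^2L+\tfrac{\lambda^2}{4}(x_1^2+x_2^2)=3\tau^2L-\lambda$, not $-\lambda A+3\tau^2L+\tfrac{\lambda^2}{4}x_1^2$. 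So your assertion that the $A$-carrying derivative terms ``account for the $A$-dependence in the statement'' is too charitable to the printed formula: the stray terms $-\tfrac{\lambda^2}{4}x_2^2X_1+\tfrac{\lambda\tau}{2}x_2X_3$ appear to come from evaluating $-\tfrac{\lambda}{2}x_2\nabla^L_{X_1}X_2$ in place of $-\tfrac{\lambda}{2}x_2\nabla^L_{X_1}X_1$ inside $\nabla^L_{[X_1,X_2]}X_1$, which simultaneously drops the needed $+\tfrac{\lambda^2}{4}x_2^2X_2$. The curvature symmetry $\langle R(X,Y)Z,W\rangle=-\langle R(X,Y)W,Z\rangle$, which you did not invoke (you only used antisymmetry in the first two slots), is the cheapest check and catches this immediately; it is worth adding to your bookkeeping.
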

\begin{prop}\cite{WW1} Away from characteristic points, we have
\begin{equation}\label{a28}
\mathcal{K}^{\Sigma,L}(\overline{e}_1,\overline{e}_2)\rightarrow A_1+O(\frac{1}{\sqrt{L}}),~~{\rm as}~~L\rightarrow +\infty,
\end{equation}
where
\begin{equation}
A_1:=-2\tau\big\langle \overline{e}_1,\nabla_H(\frac{X_3u}{|\nabla_Hu|})\big\rangle-\frac{4\tau^2(X_3u)^2}{p^2+q^2}+\frac{\lambda\tau\overline{q}X_3u}{\sqrt{p^2+q^2}}.
\end{equation}
\end{prop}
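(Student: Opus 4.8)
The plan is to start from the Gauss equation (\ref{a25}), $\mathcal{K}^{\Sigma,L}(\overline{e}_1,\overline{e}_2)=\mathcal{K}^{L}(\overline{e}_1,\overline{e}_2)+{\rm det}(II^L)$, evaluate the two summands using Lemma \ref{lemma2} and (\ref{a24}) respectively, and then let $L\to+\infty$. The whole difficulty is the bookkeeping of the powers of $L$: one must keep $|X_3|_L^2=L$ (while $X_1,X_2$ stay unit) and the rescalings $\widetilde{X_3}=L^{-1/2}X_3$, $r=\widetilde{X_3}u=L^{-1/2}X_3u$, $\overline{r_L}=r/l_L$ straight, so that $\overline{r_L}=O(L^{-1/2})$ and $l_L\to l$, $\overline{p_L}\to\overline{p}$, $\overline{q_L}\to\overline{q}$ as $L\to\infty$.

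For the ambient term I would substitute $\overline{e}_1=\overline{q}X_1-\overline{p}X_2$ and $\overline{e}_2=\overline{r_L}(\overline{p}X_1+\overline{q}X_2)-\frac{l}{l_L}\widetilde{X_3}$ from (\ref{a23}) into $\mathcal{K}^{L}(\overline{e}_1,\overline{e}_2)=-\langle R^{L}(\overline{e}_1,\overline{e}_2)\overline{e}_1,\overline{e}_2\rangle_L$, expand by bilinearity of $R^L$, and evaluate the result using Lemma \ref{lemma2} (note $\overline{e}_1$ has no $X_3$-part, so only $R^L(\,\cdot\,,\,\cdot\,)X_1$ and $R^L(\,\cdot\,,\,\cdot\,)X_2$ occur). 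In the $X_1,X_2,X_3$-frame the $X_3$-coefficient of $\overline{e}_2$ is $-\frac{l}{l_L}L^{-1/2}$; when this is squared and paired against $X_3$ through $|X_3|_L^2=L$ with the terms of Lemma \ref{lemma2} that carry an explicit power of $L$, it produces at the same time an $O(1)$ contribution of the shape $\tau^2(X_3u)^2/l_L^2$ — through $\overline{r_L}^2=L^{-1}(X_3u)^2/l_L^2$ — and a divergent contribution $\frac{l^2}{l_L^2}\tau^2L$, while every term of Lemma \ref{lemma2} without an explicit $L$ only contributes $O(1/L)$. Writing $\frac{l^2}{l_L^2}=1-(X_3u)^2/(l_L^2L)$ splits the divergent piece into $\tau^2L$ plus a finite remainder.

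For the extrinsic term I would use ${\rm det}(II^L)=h_{11}h_{22}-h_{12}^2$ with the entries from (\ref{a24}). One checks $h_{11}\to X_1(\overline{p})+X_2(\overline{q})-\frac{\lambda}{2}(\overline{p}x_1+\overline{q}x_2)=\mathcal{H}_\infty$, while each summand of $h_{22}$ is $O(1/L)$ (roughly because $\overline{e}_2$ has horizontal part of size $\overline{r_L}$, $\nabla_H(r/l)$ and $\widetilde{X_3}$ each carry a factor $L^{-1/2}$, and $\overline{r_L}^2=O(1/L)$), so $h_{11}h_{22}=O(1/L)$. The decisive term is $-h_{12}^2=-\big(\frac{l_L}{l}\langle\overline{e}_1,\nabla_H\overline{r_L}\rangle_L+\tau\sqrt{L}\big)^2$: since $\overline{r_L}=L^{-1/2}X_3u/l_L$, the cross term equals $-2\tau\frac{l_L}{l}\langle\overline{e}_1,\nabla_H(\frac{X_3u}{l_L})\rangle_L$, which converges to $-2\tau\langle\overline{e}_1,\nabla_H(\frac{X_3u}{|\nabla_Hu|})\rangle$; the square contributes the divergent $-\tau^2L$ and the remaining piece is $O(1/L)$.

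Adding the two, the divergent $\frac{l^2}{l_L^2}\tau^2L$ from $\mathcal{K}^L$ cancels the $-\tau^2L$ from $h_{12}^2$; then letting $l_L\to l=|\nabla_Hu|$, $\overline{p_L}\to\overline{p}$, $\overline{q_L}\to\overline{q}$, $\overline{r_L}\to0$ in everything that survives collapses the limit to the right-hand side $A_1$ of (\ref{a28}), with all discarded terms $O(1/\sqrt{L})$; the $\lambda$-dependent contributions come from the $\frac{\lambda}{2}x_j$-factors in Lemma \ref{lemma2} and in $h_{11},h_{22}$. The main obstacle is exactly this $L$-bookkeeping — making the cancellation of the two divergent $\tau^2L$ contributions (one intrinsic to the curvature of $(N,g_L)$, one coming from $h_{12}^2$) completely explicit, and controlling the $O(1/\sqrt{L})$ remainders in $h_{22}$, in $\nabla_H\overline{r_L}$ and $\widetilde{X_3}(\overline{r_L})$, and in the expansions of $l^2/l_L^2$ and $1/l_L^2$. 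Everything else is a long but mechanical substitution.
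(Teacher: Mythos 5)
Your proposal is correct and follows exactly the route this result is meant to be (and, in the cited source \cite{WW1}, is) proved by: the paper itself states the proposition without proof, but all the ingredients it assembles beforehand --- the Gauss equation (\ref{a25}), the entries $h_{ij}$ of $II^L$ in (\ref{a24}), and the curvature components of Lemma \ref{lemma2} --- are precisely what you combine. Your $L$-bookkeeping is right: the divergent $\tfrac{l^2}{l_L^2}\tau^2L$ from $\mathcal{K}^L$ against $-\tau^2L$ from $h_{12}^2$ leaves the finite remainder $-\tau^2(X_3u)^2/l_L^2$, which together with the $3\tau^2L\,\overline{r_L}^{\,2}$ contribution yields the $-4\tau^2(X_3u)^2/(p^2+q^2)$ term, the cross term of $h_{12}^2$ gives $-2\tau\langle\overline{e}_1,\nabla_H(X_3u/|\nabla_Hu|)\rangle$, and $h_{11}h_{22}=O(1/L)$ as you say.
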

Let $\Sigma\subset(N,g_L)$ be a Euclidean $C^2$-smooth surface and $\Sigma=\{u=0\}$. Let $d\sigma_{\Sigma,L}$ denote the surface measure on $\Sigma$ with respect to the Riemannian metric $g_L$.
Then similar to Proposition 4.2 in \cite{WW}, we have
\begin{equation}\label{a2}
{\rm lim}_{L\rightarrow +\infty}\frac{1}{\sqrt{L}}d\sigma_{\Sigma,L}=d\sigma_{\Sigma}:=\overline{p}\omega_2\wedge \omega_3-\overline{q}\omega_1\wedge \omega_3.
\end{equation}
\indent Therefore, by (\ref{a1}) and (\ref{a2}), we have
\begin{align}
{\rm lim}_{L\rightarrow +\infty}\frac{1}{\sqrt{L}}\int_{B\times F^{L}}S^{L}dvol_{B}dvol_{L}=&\int_{B\times F}\Big(f^2S^{B}
+2\widetilde{l}f\triangle_Bf+A_1+\widetilde{l}(\widetilde{l}-1)|{\rm grad}_Bf|^2_B\Big)dvol_{B}d\sigma_{\Sigma}.
\end{align}
\indent Let us recall the Kastler-Kalau-Walze type theorem. Let $M$ be a $2m$-dimensional spin manifold without boundary. Let ${\rm Wres}$ denote the Wodzicki residue and $D$ be a Dirac operator. Then
\begin{thm}(Kastler-Kalau-Walze) The following equality holds:
\begin{align}
{\rm Wres}(D^{-2m+2})=\frac{m-1}{6}\frac{2\pi ^m}{\Gamma(m)}\int_{M}Sdvol_{M},
\end{align}
where $S$ is the scalar curvature.
\end{thm}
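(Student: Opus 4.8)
The plan is to compute the Wodzicki residue through its local symbol formula and to identify it with the second Seeley--DeWitt coefficient $a_2$ of $D^2$. Writing $n=2m$, Wodzicki's theorem expresses
$$
{\rm Wres}(D^{-(n-2)})=\int_M\Big(\int_{|\xi|=1}{\rm tr}\big(\sigma_{-n}(D^{-(n-2)})(x,\xi)\big)\,dS(\xi)\Big)dvol_M ,
$$
where $\sigma_{-n}$ is the component of the complete symbol of $D^{-(n-2)}$ that is homogeneous of degree $-n$ in $\xi$ and ${\rm tr}$ is the fibrewise trace on the spinor bundle. Thus the whole problem reduces to the explicit determination of this single symbol component.

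First I would compute the symbol of $D^{-2}$. By the Lichnerowicz formula $D^2=\Delta+\tfrac14 S$, where $\Delta$ is the spinor Bochner Laplacian and $S$ the scalar curvature, one has $\sigma(D^2)=c_2+c_1+c_0$ with $c_2=|\xi|_g^2$ and $c_0$ containing the potential $\tfrac14 S$ together with the zero-order part of $\Delta$. Inverting recursively gives $\sigma(D^{-2})=p_{-2}+p_{-3}+p_{-4}+\cdots$, with $p_{-2}=c_2^{-1}$ and each $p_{-2-j}$ a universal polynomial in $c_2^{-1}$, the $c_i$ and their $x$-derivatives, and the $\xi$-derivatives of the lower-order $p$'s. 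Then, since $n-2=2(m-1)$ is even, $D^{-(n-2)}=(D^{-2})^{m-1}$, so $\sigma_{-n}(D^{-(n-2)})$ is obtained by composing $\sigma(D^{-2})$ with itself $m-1$ times and keeping the degree $-2m$ part. Evaluating at a fixed point $x_0$ in Riemannian normal coordinates, so that $g_{ij}(x_0)=\delta_{ij}$, $\partial_k g_{ij}(x_0)=0$ and the second derivatives of $g$ at $x_0$ are expressed through the curvature tensor, one gets $c_1(x_0)=0$ and $p_{-3}(x_0,\xi)=0$; hence the only surviving term is the one that uses $p_{-4}$ once and $p_{-2}=|\xi|^{-2}$ in the remaining $m-2$ slots, giving
$$
\sigma_{-n}(D^{-(n-2)})(x_0,\xi)=(m-1)\,|\xi|^{-2(m-2)}\,p_{-4}(x_0,\xi).
$$
The term $p_{-4}(x_0,\xi)$ is where curvature enters: partly from $\tfrac14 S$ in $c_0$ and partly from the second-order Taylor expansion of $g^{ij}$ (and of the spin connection coefficients) hidden inside $c_2$ and inside the lower-order pieces of $\Delta$.

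Finally I would integrate ${\rm tr}\big((m-1)|\xi|^{-2(m-2)}p_{-4}(x_0,\xi)\big)$ over $|\xi|=1$, using ${\rm tr}({\rm Id})=2^m$ and the standard sphere integrals $\int_{S^{n-1}}\xi_i\xi_j\,dS=\tfrac{{\rm Vol}(S^{n-1})}{n}\delta_{ij}$ together with its degree-four analogue. The curvature terms then collapse to a multiple of $S(x_0)$, and inserting ${\rm Vol}(S^{2m-1})=\tfrac{2\pi^m}{\Gamma(m)}$ yields exactly $\tfrac{m-1}{6}\tfrac{2\pi^m}{\Gamma(m)}S(x_0)$; integrating over $M$ completes the proof. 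Alternatively one can bypass the symbol computation by quoting Ackermann's identification \cite{Ac} of ${\rm Wres}(D^{-(n-2)})$ with $a_2(D^2)$ together with Gilkey's heat-coefficient formula for a Dirac Laplacian, and then matching constants. The main obstacle I anticipate is the explicit evaluation of $p_{-4}(x_0,\xi)$ and of its sphere integral: one must keep precise track of every curvature contribution --- the Lichnerowicz potential $\tfrac14 S$, the second-order metric corrections to $|\xi|_g^2$, and the zero-order part of $\Delta$ --- and verify that after the fibrewise trace and the cosphere integration they combine into precisely $\tfrac16 S$ rather than some other multiple. This is the computational core carried out by Kastler \cite{Ka} and, in normal coordinates, by Kalau and Walze \cite{KW}, whose bookkeeping I would follow.
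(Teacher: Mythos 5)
The paper does not prove this theorem at all: it is recalled as a classical result with citations to Kastler \cite{Ka}, Kalau--Walze \cite{KW} and Ackermann \cite{Ac}, and your outline is exactly the standard symbol-calculus proof from those references (Wodzicki's local formula for $\sigma_{-2m}$, the Lichnerowicz formula, normal coordinates killing $c_1$ and $p_{-3}$ so that only $(m-1)|\xi|^{-2(m-2)}p_{-4}$ survives, then cosphere integration). So your proposal is correct and consistent with the proof the paper relies on; the only step requiring real care is the final constant-matching, where the fibrewise trace factor $2^m$ must be reconciled with the normalization of ${\rm Wres}$ used in the statement.
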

\indent Hence, we obtain the following theorem.
\begin{thm}
Let $M=B\times_{f}F^L$ be spin and $D$ be the associated Dirac operator. Suppose that the characteristic set $C(F)$ satisfies $\mathcal{H}^1(C(F))=0$ where $\mathcal{H}^1(C(F))$ denotes the Euclidean $1$-dimensional Hausdorff measure of $C(F)$ and that $||\nabla_Hu||_H^{-1}$ is locally summable with respect to the Euclidean $2$-dimensional Hausdorff measure near the characteristic set $C(F)$ (for related definition, see \cite{BTV}), then
\begin{align}
{\rm lim}_{L\rightarrow +\infty}\frac{1}{\sqrt{L}}{\rm Wres}(D^{-2m+2})=&\frac{m-1}{6}\frac{2\pi ^m}{\Gamma(m)}\int_{B\times F}\Big(f^2S^{B}
+2\widetilde{l}f\triangle_Bf+A_1+\widetilde{l}(\widetilde{l}-1)|{\rm grad}_Bf|^2_B\Big)dvol_{B}d\sigma_{\Sigma}.
\end{align}
\end{thm}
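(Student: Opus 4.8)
The plan is to reduce the statement to two ingredients that are already available: the Kastler-Kalau-Walze type theorem quoted above, and the sub-Riemannian limit of the integrated scalar curvature displayed just before the statement. First I would apply the Kastler-Kalau-Walze theorem to the closed spin manifold $M=B\times_{f}F^{L}$ with its Dirac operator $D$, obtaining
$$\mathrm{Wres}(D^{-2m+2})=\frac{m-1}{6}\frac{2\pi^{m}}{\Gamma(m)}\int_{M}S^{L}\,dvol_{M},$$
where $S^{L}$ is the scalar curvature of $(M,g_{B}\oplus f^{2}g_{L})$ and the Riemannian integral over $M$ is written over base and fiber as $\int_{B\times F^{L}}S^{L}\,dvol_{B}\,dvol_{L}$, the fiber density carrying the twisting factor $f^{\dim F}$ (here $\dim F=2$). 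Dividing by $\sqrt{L}$ and letting $L\to+\infty$ gives
$$\lim_{L\to+\infty}\frac{1}{\sqrt{L}}\,\mathrm{Wres}(D^{-2m+2})=\frac{m-1}{6}\frac{2\pi^{m}}{\Gamma(m)}\,\lim_{L\to+\infty}\frac{1}{\sqrt{L}}\int_{B\times F^{L}}S^{L}\,dvol_{B}\,dvol_{L},$$
and the limit identity established before the statement identifies the right-hand limit with the asserted integral over $B\times F$. So the theorem follows by combining these two formulas, provided that the limit identity is valid under the stated hypotheses.

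The role of those hypotheses is exactly to justify the passage to the limit. I would recall how the limit identity is built: one substitutes the twisted-product scalar curvature decomposition (\ref{a1}),
$$S^{L}=S^{B}+\frac{2\widetilde{l}}{f}\triangle_{B}f+\frac{S^{F^{L}}}{f^{2}}+\widetilde{l}(\widetilde{l}-1)\frac{|\mathrm{grad}_{B}f|_{B}^{2}}{f^{2}},$$
into $\frac{1}{\sqrt{L}}\int_{B\times F^{L}}S^{L}\,dvol_{B}\,dvol_{L}$ and passes to the limit summand by summand. Upon multiplication by $dvol_{M}$ the twisting factor $f^{2}$ absorbs the $f^{-1}$ and $f^{-2}$ of (\ref{a1}), so the three summands coming from $S^{B}$, $\triangle_{B}f$ and $|\mathrm{grad}_{B}f|_{B}^{2}$ are independent of $L$; by the surface-measure convergence $\frac{1}{\sqrt{L}}\,d\sigma_{\Sigma,L}\to d\sigma_{\Sigma}$ of (\ref{a2}) they converge to $f^{2}S^{B}$, $2\widetilde{l}f\triangle_{B}f$ and $\widetilde{l}(\widetilde{l}-1)|\mathrm{grad}_{B}f|_{B}^{2}$, integrated against $dvol_{B}\,d\sigma_{\Sigma}$. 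The remaining summand, carried by $S^{F^{L}}$, is, after the same cancellation, the intrinsic curvature $\mathcal{K}^{\Sigma,L}$ of the fiber surface integrated against $\frac{1}{\sqrt{L}}\,d\sigma_{\Sigma,L}$, and by (\ref{a28}) combined with (\ref{a2}) it converges to the $A_{1}$ term. Summing the four contributions reproduces the right-hand side of the statement.

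The only point that is not routine --- and the step I expect to be the main obstacle --- is the interchange of $\lim_{L\to+\infty}$ with the fiber integral in the last summand. The convergence $\mathcal{K}^{\Sigma,L}\to A_{1}$ of (\ref{a28}) is pointwise and holds only away from the characteristic set $C(F)$, while the limit $A_{1}$ itself carries the factor $|\nabla_{H}u|^{-1}$, which is singular on $C(F)$. To close the argument one needs: (i) that $C(F)$ be negligible for the limiting measure $d\sigma_{\Sigma}$, which is exactly what the hypothesis $\mathcal{H}^{1}(C(F))=0$ provides; and (ii) an $L$-uniform integrable majorant for $\frac{1}{\sqrt{L}}\,S^{F^{L}}$ on a neighborhood of $C(F)$, which is supplied by the local summability of $\|\nabla_{H}u\|_{H}^{-1}$ with respect to the Euclidean $2$-dimensional Hausdorff measure; the dominated convergence theorem then applies, in the spirit of the arguments for the sub-Riemannian Gauss-Bonnet theorem in \cite{WW1}, \cite{CDPT} and \cite{BTV}. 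Once this estimate near the characteristic set is secured, the rest is the routine verification of the four limits above.
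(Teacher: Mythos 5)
Your proposal follows exactly the paper's (implicit) argument: apply the Kastler--Kalau--Walze identity to write ${\rm Wres}(D^{-2m+2})$ as the scalar-curvature integral, then invoke the limit formula obtained from the twisted-product decomposition (\ref{a1}) together with (\ref{a28}) and the surface-measure convergence (\ref{a2}). Your additional discussion of why the hypotheses on $C(F)$ are needed (negligibility of the characteristic set for $d\sigma_{\Sigma}$ and an integrable majorant permitting dominated convergence) is in fact more explicit than the paper, which passes over this point silently.
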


\section{ The sub-Riemannian Dabrowski-Sitarz-Zalecki type theorem for the twisted BCV spaces }
In this section, we will compute the sub-Riemannian limit of the Einstein functional. The following is discussed in three cases:\\

\noindent {\bf Case (a)} By Proposition \ref{prop1}, when $X,Y\in \Gamma(TB)$,  we have
\begin{align}
{\rm Ric}^{L} (X,Y)-\frac{S^L}{2}g^{L}(X,Y)=&{\rm Ric}^B(X,Y)+\frac{\widetilde{l}}{f}H_B^{f}(X,Y)-\frac{S^L}{2}g^{B}(X,Y)\nonumber\\
=&{\rm Ric}^B(X,Y)+\frac{\widetilde{l}}{f}H_B^{f}(X,Y)-\Big(\frac{S^B}{2}+\frac{\widetilde{l}}{f}\triangle_Bf+\frac{S^{F^L}}{2f^2}\nonumber\\
&+\widetilde{l}(\widetilde{l}-1)\frac{|{\rm grad}_Bf|^2_B}{2f^2}\Big)g^{B}(X,Y).
\end{align}
In this case, we obtain
\begin{align}\label{a29}
&{\rm lim}_{L\rightarrow +\infty}\frac{1}{\sqrt{L}}\int_{B\times F^{L}}\Big({\rm Ric}^{L} (X,Y)-\frac{S^L}{2}g^{L}(X,Y)\Big)dvol_{B}dvol_{L}\nonumber\\
=&\int_{B\times F}\bigg[f^2{\rm Ric}^B(X,Y)+\widetilde{l}fH_B^{f}(X,Y)-\Big(f^2\frac{S^{B}}{2}+\widetilde{l}f\triangle_Bf+\frac{A_1}{2}+\frac{\widetilde{l}(\widetilde{l}-1)}{2}|{\rm grad}_Bf|^2_B\Big)g^{B}(X,Y)\bigg]dvol_{B}d\sigma_{\Sigma}.
\end{align}
\noindent {\bf Case (b)} By Proposition \ref{prop1}, when $X\in \Gamma(TB), Y\in \Gamma(TF)$, we have
\begin{align}
{\rm Ric}^{L} (X,Y)-\frac{S^L}{2}g^{L}(X,Y)=(\widetilde{l}-1)[YX({\rm ln}f)].
\end{align}
In this case, we obtain
\begin{align}\label{a30}
&{\rm lim}_{L\rightarrow +\infty}\frac{1}{\sqrt{L}}\int_{B\times F^{L}}\Big({\rm Ric}^{L} (X,Y)-\frac{S^L}{2}g^{L}(X,Y)\Big)dvol_{B}dvol_{L}\nonumber\\
=&\int_{B\times F}(\widetilde{l}-1)f^2[YX({\rm ln}f)]dvol_{B}d\sigma_{\Sigma}.
\end{align}
\noindent {\bf Case (c)} By Proposition \ref{prop1}, when $X,Y\in \Gamma(TF)$, we have
\begin{align}\label{a7}
{\rm Ric}^{L} (X,Y)-\frac{S^L}{2}g^{L}(X,Y)=&{\rm Ric}^{F^L}
(X,Y)+\left[\frac{\triangle_Bf}{f}+(\widetilde{l}-1)\frac{|{\rm
grad}_Bf|^2_B}{f^2}\right]f^2g^L(X,Y)\nonumber\\
&-\frac{S^L}{2}f^2g^{L}(X,Y).
\end{align}
Set $$X=\sum_{j=1}^3a_jX_j, \ \ Y=\sum_{j=1}^3b_jX_j,$$ then
\begin{align}
X=&<X,\overline{e}_1^L>\overline{e}_1^L+<X,\overline{e}_2^L>\overline{e}_2^L\nonumber\\
=&\Big<\sum_{j=1}^3a_jX_j,\overline{q}X_1-\overline{p}X_2\Big>_{L}\overline{e}_1^L+\Big<\sum_{j=1}^3a_jX_j,\overline{r_L}~~\overline{p}X_1+\overline{r_L}~~ \overline{q}X_2-\frac{l}{l_L}\widetilde{X_3}\Big>_{L}\overline{e}_2^L\nonumber\\
=&\big(a_1\overline{q}-a_2\overline{p}\big)\overline{e}_1^L+\big(a_1\overline{r_L}~\overline{p}+a_2\overline{r_L}~\overline{q}-a_3\frac{l}{l_L}L^{\frac{1}{2}}\big)\overline{e}_2^L\nonumber\\
:=&\Phi_1\overline{e}_1^L+\Phi_2\overline{e}_2^L.
\end{align}
Similarly, we have
\begin{align}
Y=&\big(b_1\overline{q}-b_2\overline{p}\big)\overline{e}_1^L+\big(b_1\overline{r_L}~\overline{p}+b_2\overline{r_L}~\overline{q}-b_3\frac{l}{l_L}L^{\frac{1}{2}}\big)\overline{e}_2^L\nonumber\\
:=&\Psi_1\overline{e}_1^L+\Psi_2\overline{e}_2^L.
\end{align}
So
\begin{align}\label{a5}
g^{L}(X,Y)=&\big(a_1\overline{q}-a_2\overline{p}\big)\big(b_1\overline{q}-b_2\overline{p}\big)\nonumber\\
&+\big(a_1\overline{r_L}~\overline{p}+a_2\overline{r_L}~\overline{q}-a_3\frac{l}{l_L}L^{\frac{1}{2}}\big)\big(b_1\overline{r_L}~\overline{p}+b_2\overline{r_L}~\overline{q}-b_3\frac{l}{l_L}L^{\frac{1}{2}}\big)\nonumber\\
=&\Phi_1\Psi_1+\Phi_2\Psi_2,
\end{align}
and
\begin{align}
{\rm Ric}^{F^L}(X,Y)=&{\rm Ric}^{L}\big(\Phi_1\overline{e}_1^L+\Phi_2\overline{e}_2^L,\Psi_1\overline{e}_1^L+\Psi_2\overline{e}_2^L\big)\nonumber\\
=&\Phi_1\Psi_1{\rm Ric}^{L}(\overline{e}_1^L,\overline{e}_1^L)+\Phi_1\Psi_2{\rm Ric}^{L}(\overline{e}_1^L,\overline{e}_2^L)\nonumber\\
&+\Phi_2\Psi_1{\rm Ric}^{L}(\overline{e}_2^L,\overline{e}_1^L)+\Phi_2\Psi_2{\rm Ric}^{L}(\overline{e}_2^L,\overline{e}_2^L).
\end{align}
By definition  $${\rm Ric}^{L}(X,Y)=\sum_{j=1}^2R^L(\overline{e}_j^L,X,\overline{e}_j^L,Y),$$ we have
\begin{align}\label{a3}
{\rm Ric}^{L}(\overline{e}_1^L,\overline{e}_1^L)=&R^L(\overline{e}_2^L,\overline{e}_1^L,\overline{e}_2^L,\overline{e}_1^L)\nonumber\\
=&R^L(\overline{e}_1^L,\overline{e}_2^L,\overline{e}_1^L,\overline{e}_2^L)\nonumber\\
=&\mathcal{K}^{\Sigma,L}(\overline{e}_1,\overline{e}_2).
\end{align}
Likewise, we have
\begin{align}\label{a4}
{\rm Ric}^{L}(\overline{e}_2^L,\overline{e}_2^L)=\mathcal{K}^{\Sigma,L}(\overline{e}_1,\overline{e}_2).
\end{align}
By (\ref{a3}), (\ref{a4}) and $${\rm Ric}^{L}(\overline{e}_1^L,\overline{e}_2^L)={\rm Ric}^{L}(\overline{e}_2^L,\overline{e}_1^L)=0,$$ we get
\begin{align}\label{a6}
{\rm Ric}^{F^L}(X,Y)=&\Phi_1\Psi_1\mathcal{K}^{\Sigma,L}(\overline{e}_1,\overline{e}_2)+\Phi_2\Psi_2\mathcal{K}^{\Sigma,L}(\overline{e}_1,\overline{e}_2)\nonumber\\
=&\big(\Phi_1\Psi_1+\Phi_2\Psi_2\big)\mathcal{K}^{\Sigma,L}(\overline{e}_1,\overline{e}_2)\nonumber\\
=&g^{L}(X,Y)\mathcal{K}^{\Sigma,L}(\overline{e}_1,\overline{e}_2).
\end{align}
Substituting the results from (\ref{a5}) and (\ref{a6}) into (\ref{a7}), this yields
\begin{align}
{\rm Ric}^{L} (X,Y)-\frac{S^L}{2}g^{L}(X,Y)=&{\rm Ric}^{F^L}
(X,Y)+\left[\frac{\triangle_Bf}{f}+(\widetilde{l}-1)\frac{|{\rm
grad}_Bf|^2_B}{f^2}\right]f^2g^L(X,Y)\nonumber\\
&-\frac{S^L}{2}f^2g^{L}(X,Y)\nonumber\\
=&\Big(\mathcal{K}^{\Sigma,L}(\overline{e}_1,\overline{e}_2)+(1-\widetilde{l})f\triangle_Bf-f^2\frac{S^B}{2}-\frac{S^{F^L}}{2}\nonumber\\
&+\frac{(2-\widetilde{l})(\widetilde{l}-1)}{2}|{\rm grad}_Bf|^2_B\Big)g^{L}(X,Y).
\end{align}
In this case, we obtain
\begin{align}\label{a31}
&{\rm lim}_{L\rightarrow +\infty}\frac{1}{L}\frac{1}{\sqrt{L}}\int_{B\times F^{L}}\Big({\rm Ric}^{L} (X,Y)-\frac{S^L}{2}g^{L}(X,Y)\Big)dvol_{B}dvol_{L}\nonumber\\
=&\int_{B\times F}\Big(f^2\frac{A_1}{2}+(1-\widetilde{l})f^3\triangle_Bf-f^4\frac{S^B}{2}+\frac{(2-\widetilde{l})(\widetilde{l}-1)}{2}f^2|{\rm grad}_Bf|^2_B\Big)a_3b_3dvol_{B}d\sigma_{\Sigma}.
\end{align}
\indent Let us recall the Dabrowski-Sitarz-Zalecki type theorem. Let $M$ be a $2m$-dimensional spin manifold without boundary. Let ${\rm Wres}$ denote the Wodzicki residue and $D$ be a Dirac operator and $\nabla^{S(TM)}$ be the spin connection and $X,Y$ be two smooth vector fields. Then
\begin{thm}(Dabrowski-Sitarz-Zalecki) The following equalities hold:
\begin{align}
&{\rm Wres}(\nabla_X^{S(TM)}\nabla_Y^{S(TM)}D^{-2m})=\frac{2^m}{6}\frac{2\pi ^m}{\Gamma(m)}\int_{M}\Big({\rm Ric} (X,Y)-\frac{1}{2}Sg(X,Y)\Big)dvol_{M},\nonumber\\
&{\rm Wres}\Big(c(X)\big(Dc(Y)+c(Y)D\big)D^{-2m+1}\Big)=\frac{2^m}{6}\frac{2\pi ^m}{\Gamma(m)}\int_{M}\Big({\rm Ric} (X,Y)-\frac{1}{2}Sg(X,Y)\Big)dvol_{M}.
\end{align}
\end{thm}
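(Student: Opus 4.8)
The plan is to establish both identities by the standard route for noncommutative‑residue identities of this kind: reduce the Wodzicki residue to a local integral of one homogeneous symbol component over the cosphere bundle, compute the relevant symbol of powers of the Dirac operator by the resolvent–parametrix recursion together with the Lichnerowicz formula, and finally perform the $\xi$‑integration and the spinor trace. Concretely, for a classical $\Psi$DO $P$ acting on sections of $S(TM)$ over the $2m$‑dimensional $M$, one has, with the normalisation consistent with the Kastler-Kalau-Walze theorem recalled above,
\begin{equation*}
{\rm Wres}(P)=\int_{M}\Big(\int_{|\xi|=1}{\rm tr}_{S(TM)}\big(\sigma_{-2m}(P)(x,\xi)\big)\,d\xi\Big)dvol_{M},
\end{equation*}
where $\sigma_{-2m}(P)$ is the term of the complete symbol of $P$ homogeneous of degree $-2m$ in $\xi$. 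So everything reduces to isolating this term for $P=\nabla_X^{S(TM)}\nabla_Y^{S(TM)}D^{-2m}$, respectively for $P=c(X)\big(Dc(Y)+c(Y)D\big)D^{-2m+1}$, tracing over the spinor fibre and integrating.

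I would then fix $x_{0}\in M$, use geodesic normal coordinates centered at $x_{0}$ and a synchronous orthonormal frame, so that $g_{jk}(x_{0})=\delta_{jk}$, the Christoffel symbols and the spin‑connection form vanish at $x_{0}$, and their first derivatives at $x_{0}$ reproduce the curvature. By the Lichnerowicz formula $D^{2}=\nabla^{*}\nabla+\tfrac14 S$ (up to the sign conventions fixed above, with $\nabla^{*}\nabla$ the Bochner Laplacian), the complete symbol of $D^{-2m}=(D^{2})^{-m}$ is obtained from that of $D^{2}=|\xi|^{2}+p_{1}+p_{0}$ by the parametrix recursion: it has the form $q_{-2m}+q_{-2m-1}+q_{-2m-2}+\cdots$ with $q_{-2m}=|\xi|^{-2m}$, with $q_{-2m-1}$ linear in $p_{1}$ (hence vanishing at $x_{0}$, though with nonvanishing $x$‑derivative there), and with $q_{-2m-2}(x_{0})$ a universal curvature expression — containing the term $-\tfrac{m}{4}S|\xi|^{-2m-2}$ coming from $p_{0}$ together with Riemann‑tensor contractions $R_{ikjl}\xi_{i}\xi_{j}|\xi|^{-2m-4}$ coming from the second‑order Taylor expansion of the metric. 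Since $\sigma(\nabla_X^{S(TM)})=i\,\xi(X)+\omega(X)$ with $\omega(X)(x_{0})=0$, the degree $-2m$ part of $\sigma(\nabla_X^{S(TM)}\nabla_Y^{S(TM)}D^{-2m})$ at $x_{0}$ collects exactly: the product $-\xi(X)\xi(Y)\,q_{-2m-2}$; the cross terms produced by the composition formula $\sigma(AB)\sim\sum_{\alpha}\tfrac{1}{\alpha!}\partial_\xi^{\alpha}\sigma(A)\,\partial_x^{\alpha}\sigma(B)$ in which an $x$‑derivative hits the normal‑coordinate expansion of $q_{-2m}$ or $q_{-2m-1}$; and the degree‑zero part of $\sigma(\nabla_X^{S(TM)}\nabla_Y^{S(TM)})$, namely $\partial_x\omega$ at $x_{0}$ (a curvature term valued in $c(e_i)c(e_j)$), against $q_{-2m}$.

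Next I would carry out the cosphere integration and the spinor trace. Using
\begin{equation*}
\int_{|\xi|=1}\xi_{i}\xi_{j}\,d\xi=\frac{{\rm Vol}(S^{2m-1})}{2m}\delta_{ij},\qquad
\int_{|\xi|=1}\xi_{i}\xi_{j}\xi_{k}\xi_{l}\,d\xi=\frac{{\rm Vol}(S^{2m-1})}{2m(2m+2)}\big(\delta_{ij}\delta_{kl}+\delta_{ik}\delta_{jl}+\delta_{il}\delta_{jk}\big),
\end{equation*}
together with ${\rm Vol}(S^{2m-1})=\tfrac{2\pi^{m}}{\Gamma(m)}$ and the Clifford traces ${\rm tr}_{S(TM)}({\rm Id})=2^{m}$, ${\rm tr}_{S(TM)}\big(c(e_i)c(e_j)\big)=-2^{m}\delta_{ij}$, the $\xi(X)\xi(Y)$ factors become $g(X,Y)$, so the $S|\xi|^{-2m-2}$ part of $q_{-2m-2}$ produces a multiple of $S\,g(X,Y)$, while the Riemann‑tensor parts and the spin‑connection‑curvature part collapse — after the first Bianchi identity and the contraction $\sum_{k}R_{ikjk}={\rm Ric}_{ij}$ — to a multiple of ${\rm Ric}(X,Y)$. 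A bookkeeping of the universal constants (the $\tfrac14$ in Lichnerowicz, the $2m$ and $2m(2m+2)$ from the sphere integrals, and the $2^{m}$ from the spinor trace) then yields the integrand $\tfrac{2^{m}}{6}\tfrac{2\pi^{m}}{\Gamma(m)}\big({\rm Ric}(X,Y)-\tfrac12 Sg(X,Y)\big)$, which is the first identity. For the second identity I would use the pointwise identity $Dc(Y)+c(Y)D=-2\nabla_Y^{S(TM)}+\sum_{i}c(e_i)c(\nabla_{e_i}Y)$, so that
\begin{equation*}
c(X)\big(Dc(Y)+c(Y)D\big)D^{-2m+1}=-2\,c(X)\nabla_Y^{S(TM)}D^{-2m+1}+c(X)\sum_{i}c(e_i)c(\nabla_{e_i}Y)\,D^{-2m+1}.
\end{equation*}
The first summand is treated exactly as before — the extra $c(X)$ pairs against the $c(\xi)$ in the leading symbol of $D^{-2m+1}=D(D^{2})^{-m}$, and after the spinor trace one again recovers $g(X,Y)$ and the same curvature combination — while the second, lower‑order summand is shown to leave no degree $-2m$ contribution after the trace over $S(TM)$, so the two functionals yield the same density.

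The step I expect to be the main obstacle is the subleading‑symbol bookkeeping: determining $q_{-2m-2}(x_{0})$ (equivalently the order $-2m-2$ term of the resolvent parametrix of $D^{2}$) explicitly enough to separate the $S$‑part from the Riemann‑tensor part, and tracking every cross term in the $\Psi$DO composition formula, so that the notorious coefficient $\tfrac16$ — and the relative factor $-\tfrac12$ between ${\rm Ric}(X,Y)$ and $Sg(X,Y)$ — emerge with the correct signs; this is the same delicate cancellation that makes the original Kastler-Kalau-Walze identity nontrivial, now carried out with the insertions $\nabla_X^{S(TM)}\nabla_Y^{S(TM)}$, respectively $c(X)\big(Dc(Y)+c(Y)D\big)$. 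A secondary technical point is arranging the normal coordinates and synchronous frame so that all first‑order metric and spin‑connection terms drop at $x_{0}$, which is what leaves the curvature terms as the only survivors.
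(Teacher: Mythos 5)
The paper does not prove this theorem at all: it is quoted as a known result of Dabrowski--Sitarz--Zalecki (reference \cite{DL}) and used as a black box, exactly as the Kastler--Kalau--Walze theorem is quoted just above it. So there is no internal proof to compare against; what you have written is a sketch of the proof that lives in the cited literature, and your strategy is indeed the standard (and correct) one: express ${\rm Wres}$ as the cosphere integral of the degree $-2m$ symbol, expand $(D^{2})^{-m}$ by the parametrix recursion in normal coordinates with a synchronous frame using the Lichnerowicz formula, then perform the $\xi$-integrals and the Clifford trace. The individual ingredients you list check out: the identity $Dc(Y)+c(Y)D=-2\nabla_{Y}^{S(TM)}+\sum_{i}c(e_{i})c(\nabla_{e_{i}}Y)$ is correct with the convention $c(X)c(Y)+c(Y)c(X)=-2g(X,Y)$, the sphere moments and ${\rm Vol}(S^{2m-1})=2\pi^{m}/\Gamma(m)$ are right, and both operators do have total order $-2m+2$ so a degree $-2m$ symbol term exists.

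The gap is that the proposal stops exactly where the theorem begins. The entire content of the statement is the pair of universal constants $\tfrac{2^{m}}{6}$ and $-\tfrac12$, and you never compute them: you assert that $q_{-2m-2}(x_{0})$ contains $-\tfrac{m}{4}S|\xi|^{-2m-2}$ plus Riemann contractions and that "a bookkeeping of the universal constants then yields" the claimed density, while simultaneously flagging this bookkeeping as the main obstacle. Without writing $q_{-2m-2}(x_{0})$ explicitly (including the cross terms of the composition formula where $\partial_{x}$ hits $q_{-2m}$ and $q_{-2m-1}$, which are of the same size as the $p_{0}$ contribution and are responsible for the delicate cancellation producing $\tfrac16$), the argument does not distinguish the true coefficients from any others. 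A smaller imprecision: the degree-zero term $X(\omega^{S}(Y))(x_{0})$ is (half) the spin curvature $\Omega^{S}(X,Y)=\tfrac14\langle R(X,Y)e_{k},e_{l}\rangle c(e_{k})c(e_{l})$, whose spinor trace vanishes identically by antisymmetry of $R(X,Y)$, so it does not "collapse to a multiple of ${\rm Ric}(X,Y)$" in any nontrivial way --- the ${\rm Ric}$ term must come entirely from the $\xi_{i}\xi_{j}R_{ikjl}$ contractions in $q_{-2m-2}$ paired with $\xi(X)\xi(Y)$ through the fourth sphere moment. As it stands the proposal is a correct roadmap to the proof in \cite{DL}, not a proof.
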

By (\ref{a29}), we have
\begin{thm}
Let $M=B\times_{f}F^L$ be spin and $D$ be the associated Dirac operator. Suppose that the characteristic set $C(F)$ satisfies $\mathcal{H}^1(C(F))=0$ and that $||\nabla_Hu||_H^{-1}$ is locally summable with respect to the Euclidean $2$-dimensional Hausdorff measure near the characteristic set $C(F)$, then the following equalities hold for $X,Y\in \Gamma(TB)$:
\begin{align}
&{\rm lim}_{L\rightarrow +\infty}\frac{1}{\sqrt{L}}{\rm Wres}(\nabla_X^{S(TM)}\nabla_Y^{S(TM)}D^{-2m})\nonumber\\
=&\frac{2^m}{6}\frac{2\pi ^m}{\Gamma(m)}\int_{B\times F}\bigg[f^2{\rm Ric}^B(X,Y)+\widetilde{l}fH_B^{f}(X,Y)-\Big(f^2\frac{S^{B}}{2}+\widetilde{l}f\triangle_Bf+\frac{A_1}{2}\nonumber\\
&+\frac{\widetilde{l}(\widetilde{l}-1)}{2}|{\rm grad}_Bf|^2_B\Big)g^{B}(X,Y)\bigg]dvol_{B}d\sigma_{\Sigma},\nonumber\\
&{\rm lim}_{L\rightarrow +\infty}\frac{1}{\sqrt{L}}{\rm Wres}\Big(c(X)\big(Dc(Y)+c(Y)D\big)D^{-2m+1}\Big)\nonumber\\
=&\frac{2^m}{6}\frac{2\pi ^m}{\Gamma(m)}\int_{B\times F}\bigg[f^2{\rm Ric}^B(X,Y)+\widetilde{l}fH_B^{f}(X,Y)-\Big(f^2\frac{S^{B}}{2}+\widetilde{l}f\triangle_Bf+\frac{A_1}{2}\nonumber\\
&+\frac{\widetilde{l}(\widetilde{l}-1)}{2}|{\rm grad}_Bf|^2_B\Big)g^{B}(X,Y)\bigg]dvol_{B}d\sigma_{\Sigma}.
\end{align}
\end{thm}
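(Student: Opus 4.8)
The plan is to reduce the statement to two facts already established in the paper: the Dabrowski-Sitarz-Zalecki type theorem recalled just above, and the limit computation of Case (a) recorded in (\ref{a29}).

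First I would apply the Dabrowski-Sitarz-Zalecki theorem to the $2m$-dimensional spin manifold $M=B\times_{f}F^{L}$, for each fixed $L>0$, with the smooth vector fields $X,Y\in\Gamma(TB)\subset\Gamma(TM)$. This gives
\begin{align*}
{\rm Wres}(\nabla_X^{S(TM)}\nabla_Y^{S(TM)}D^{-2m})
&=\frac{2^{m}}{6}\frac{2\pi^{m}}{\Gamma(m)}\int_{M}\Big({\rm Ric}^{L}(X,Y)-\tfrac12 S^{L}g^{L}(X,Y)\Big)\,dvol_{M},\\
{\rm Wres}\Big(c(X)\big(Dc(Y)+c(Y)D\big)D^{-2m+1}\Big)
&=\frac{2^{m}}{6}\frac{2\pi^{m}}{\Gamma(m)}\int_{M}\Big({\rm Ric}^{L}(X,Y)-\tfrac12 S^{L}g^{L}(X,Y)\Big)\,dvol_{M}.
\end{align*}
Writing the Riemannian volume of $M=B\times_{f}F^{L}$ in the product coordinates as in (\ref{a29}), I would then multiply both identities by $\frac{1}{\sqrt{L}}$ and let $L\to+\infty$. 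Since the constant $\frac{2^{m}}{6}\frac{2\pi^{m}}{\Gamma(m)}$ does not depend on $L$, the right-hand sides converge to that same constant times ${\rm lim}_{L\to+\infty}\frac{1}{\sqrt{L}}\int_{B\times F^{L}}\big({\rm Ric}^{L}(X,Y)-\tfrac12 S^{L}g^{L}(X,Y)\big)\,dvol_{B}\,dvol_{L}$, which is precisely the left-hand side of (\ref{a29}). Substituting the value of this limit from (\ref{a29}) into both lines yields the two claimed identities simultaneously.

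Thus the only genuine content lies in (\ref{a29}) itself, and that is exactly where the hypotheses on $C(F)$ enter. In deriving (\ref{a29}) one expands ${\rm Ric}^{L}-\tfrac12 S^{L}g^{L}$ by Proposition \ref{prop1} and (\ref{a1}), uses the identity ${\rm Ric}^{F^{L}}(X,Y)=g^{L}(X,Y)\,\mathcal{K}^{\Sigma,L}(\overline{e}_1,\overline{e}_2)$ obtained in Case (c) together with the convergence (\ref{a28}), and replaces the normalized fiber measure by $d\sigma_{\Sigma}$ via (\ref{a2}). The delicate point is the interchange of the limit $L\to+\infty$ with integration near the characteristic set $C(F)$, where both $\mathcal{K}^{\Sigma,L}$ and $\frac{1}{\sqrt{L}}d\sigma_{\Sigma,L}$ may blow up; the assumptions $\mathcal{H}^{1}(C(F))=0$ and local summability of $\|\nabla_{H}u\|_{H}^{-1}$ with respect to the Euclidean $2$-dimensional Hausdorff measure are exactly what permit a dominated-convergence argument up to $C(F)$, following the framework of \cite{BTV}. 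I expect this uniform-integrability control near $C(F)$ to be the main obstacle; once (\ref{a29}) is granted, as in the statement, the theorem follows immediately from the algebraic substitution above.
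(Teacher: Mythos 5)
Your proposal is correct and takes essentially the same route as the paper: the paper's entire argument for this theorem is to apply the recalled Dabrowski-Sitarz-Zalecki identities to $M=B\times_{f}F^{L}$ for each fixed $L$, normalize by $\frac{1}{\sqrt{L}}$, and substitute the Case (a) limit (\ref{a29}). Your closing remarks on where the hypotheses on $C(F)$ and the summability of $\|\nabla_{H}u\|_{H}^{-1}$ enter are a more explicit account of what the paper leaves implicit in deriving (\ref{a29}), but the logical structure is identical.
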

By (\ref{a30}), we have
\begin{thm}
Let $M=B\times_{f}F^L$ be spin and $D$ be the associated Dirac operator. Suppose that the characteristic set $C(F)$ satisfies $\mathcal{H}^1(C(F))=0$ and that $||\nabla_Hu||_H^{-1}$ is locally summable with respect to the Euclidean $2$-dimensional Hausdorff measure near the characteristic set $C(F)$, then the following equalities hold for $X\in \Gamma(TB)$ and $Y\in \Gamma(TF)$:
\begin{align}
&{\rm lim}_{L\rightarrow +\infty}\frac{1}{\sqrt{L}}{\rm Wres}(\nabla_X^{S(TM)}\nabla_Y^{S(TM)}D^{-2m})=\frac{2^m}{6}\frac{2\pi ^m}{\Gamma(m)}\int_{B\times F}(\widetilde{l}-1)f^2[YX({\rm ln}f)]dvol_{B}d\sigma_{\Sigma},\nonumber\\
&{\rm lim}_{L\rightarrow +\infty}\frac{1}{\sqrt{L}}{\rm Wres}\Big(c(X)\big(Dc(Y)+c(Y)D\big)D^{-2m+1}\Big)=\frac{2^m}{6}\frac{2\pi ^m}{\Gamma(m)}\int_{B\times F}(\widetilde{l}-1)f^2[YX({\rm ln}f)]dvol_{B}d\sigma_{\Sigma}.
\end{align}
\end{thm}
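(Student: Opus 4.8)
The plan is to obtain both equalities as an immediate consequence of the Dabrowski--Sitarz--Zalecki theorem, applied at fixed $L$, combined with the sub-Riemannian limit already isolated in (\ref{a30}) (this is ``Case (b)'' of the present section). First I would note that for each fixed $L>0$ the twisted product $M=B\times_{f}F^{L}$ is a closed $2m$-dimensional spin manifold: it is compact because $B$ and $F^{L}$ are, its dimension $2m$ and spin structure are part of the standing hypotheses, and it carries the associated Dirac operator $D$. Hence the Dabrowski--Sitarz--Zalecki theorem applies verbatim and gives the exact identities
\[
{\rm Wres}(\nabla_X^{S(TM)}\nabla_Y^{S(TM)}D^{-2m})={\rm Wres}\Big(c(X)\big(Dc(Y)+c(Y)D\big)D^{-2m+1}\Big)=\frac{2^m}{6}\frac{2\pi ^m}{\Gamma(m)}\int_{M}\Big({\rm Ric}^{L}(X,Y)-\frac{1}{2}S^{L}g^{L}(X,Y)\Big)\,dvol_{M}.
\]
Since the two residues have the same right-hand side, it suffices to treat one of them.

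Next I would divide both sides by $\sqrt{L}$ and pass to the limit $L\to+\infty$. On the left there is nothing to interchange, since for each $L$ the residue is a single real number; the task is thus to identify $\lim_{L\to+\infty}\frac{1}{\sqrt{L}}\int_{M}\big({\rm Ric}^{L}(X,Y)-\frac{1}{2}S^{L}g^{L}(X,Y)\big)\,dvol_{M}$. For $X\in\Gamma(TB)$ and $Y\in\Gamma(TF)$ one has $g^{L}(X,Y)=0$, and Proposition \ref{prop1}(2) gives ${\rm Ric}^{L}(X,Y)=(\widetilde{l}-1)[YX({\rm ln}f)]$, which does not depend on $L$. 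Writing the Riemannian volume of $M$ as $dvol_{B}\,dvol_{L}$ in the notation of this section, the quantity above is precisely the limit computed in (\ref{a30}), namely $\int_{B\times F}(\widetilde{l}-1)f^{2}[YX({\rm ln}f)]\,dvol_{B}\,d\sigma_{\Sigma}$. Substituting this into the two Dabrowski--Sitarz--Zalecki identities produces both asserted formulas at once.

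I expect the only step carrying genuine analytic weight to be the one already folded into (\ref{a30}) through (\ref{a2}): as $L\to+\infty$ the rescaling $L^{-1/2}$ collapses the vertical direction, so that $\frac{1}{\sqrt{L}}d\sigma_{\Sigma,L}\to d\sigma_{\Sigma}$, but the limiting density acquires a $\|\nabla_{H}u\|_{H}^{-1}$-type singularity along the characteristic set, and one must verify that this does not destroy convergence of the fiber integral. This is exactly why we assume $\mathcal{H}^{1}(C(F))=0$ and that $\|\nabla_{H}u\|_{H}^{-1}$ is locally summable with respect to the Euclidean $2$-dimensional Hausdorff measure near $C(F)$; under these hypotheses the argument of \cite{WW1,WW} (see also \cite{BTV}) shows the contribution of a neighbourhood of $C(F)$ is negligible while dominated convergence handles the complement. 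Granting (\ref{a30}), then, no new estimate is required and the proof reduces to the substitution described above.
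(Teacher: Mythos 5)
Your proposal is correct and follows essentially the same route as the paper: the paper simply states the Dabrowski--Sitarz--Zalecki identities, establishes the limit of $\frac{1}{\sqrt{L}}\int_{B\times F^{L}}\big({\rm Ric}^{L}(X,Y)-\frac{S^L}{2}g^{L}(X,Y)\big)dvol_{B}dvol_{L}$ in Case (b) as equation (\ref{a30}) using Proposition \ref{prop1}(2) and the surface-measure limit (\ref{a2}), and then concludes the theorem by substitution, exactly as you do. Your write-up is in fact somewhat more explicit than the paper's (which reads only ``By (\ref{a30}), we have''), particularly in spelling out the role of the hypotheses on the characteristic set.
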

By (\ref{a31}), we have
\begin{thm}
Let $M=B\times_{f}F^L$ be spin and $D$ be the associated Dirac operator. Suppose that the characteristic set $C(F)$ satisfies $\mathcal{H}^1(C(F))=0$ and that $||\nabla_Hu||_H^{-1}$ is locally summable with respect to the Euclidean $2$-dimensional Hausdorff measure near the characteristic set $C(F)$, then the following equalities hold for $X,Y\in \Gamma(TF)$:
\begin{align}
&{\rm lim}_{L\rightarrow +\infty}\frac{1}{(\sqrt{L})^3}{\rm Wres}(\nabla_X^{S(TM)}\nabla_Y^{S(TM)}D^{-2m})\nonumber\\
=&\frac{2^m}{6}\frac{2\pi ^m}{\Gamma(m)}\int_{B\times F}\Big(f^2\frac{A_1}{2}+(1-\widetilde{l})f^3\triangle_Bf-f^4\frac{S^B}{2}+\frac{(2-\widetilde{l})(\widetilde{l}-1)}{2}f^2|{\rm grad}_Bf|^2_B\Big)a_3b_3dvol_{B}d\sigma_{\Sigma},\nonumber\\
&{\rm lim}_{L\rightarrow +\infty}\frac{1}{(\sqrt{L})^3}{\rm Wres}\Big(c(X)\big(Dc(Y)+c(Y)D\big)D^{-2m+1}\Big)\nonumber\\
=&\frac{2^m}{6}\frac{2\pi ^m}{\Gamma(m)}\int_{B\times F}\Big(f^2\frac{A_1}{2}+(1-\widetilde{l})f^3\triangle_Bf-f^4\frac{S^B}{2}+\frac{(2-\widetilde{l})(\widetilde{l}-1)}{2}f^2|{\rm grad}_Bf|^2_B\Big)a_3b_3dvol_{B}d\sigma_{\Sigma}.
\end{align}
\end{thm}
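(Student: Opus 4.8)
The plan is to deduce this theorem from the Dabrowski--Sitarz--Zalecki type theorem recalled above, combined with the sub-Riemannian limit identity~\eqref{a31}. First I would apply the Dabrowski--Sitarz--Zalecki theorem to the spin twisted product $M=B\times_f F^L$ with its associated Dirac operator $D$: for the fixed vector fields $X,Y\in\Gamma(TF)$ this gives
\begin{equation*}
{\rm Wres}\big(\nabla_X^{S(TM)}\nabla_Y^{S(TM)}D^{-2m}\big)
=\frac{2^m}{6}\frac{2\pi^m}{\Gamma(m)}\int_{M}\Big({\rm Ric}^L(X,Y)-\frac{S^L}{2}g^L(X,Y)\Big)dvol_{M},
\end{equation*}
and the identical expression with $\nabla_X^{S(TM)}\nabla_Y^{S(TM)}D^{-2m}$ replaced by $c(X)\big(Dc(Y)+c(Y)D\big)D^{-2m+1}$; thus both Wodzicki residues in the statement are governed by one and the same geometric integral. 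Here I would use $dvol_M=dvol_B\,dvol_L$ with $dvol_L=d\sigma_{\Sigma,L}$ the surface measure induced by $g_L$ on the fiber $F^L=\Sigma$.

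Next I would divide by $(\sqrt{L})^3=L\sqrt{L}$ and let $L\to+\infty$. The precise power $L^{-3/2}$ is dictated by two effects when $X,Y\in\Gamma(TF)$: one factor $L^{-1/2}$ renormalizes the surface measure, since $\frac{1}{\sqrt{L}}d\sigma_{\Sigma,L}\to d\sigma_\Sigma$ by~\eqref{a2}; and one further factor $L^{-1}$ cancels the growth of $g^L(X,Y)$, which behaves like $a_3b_3\,L$ by~\eqref{a5} (because $r=\widetilde{X_3}u\to 0$ forces $l_L\to l$, so only the $\widetilde{X_3}$-components survive in the limit). With this normalization the right-hand side converges to the integral displayed in~\eqref{a31}, and I would simply invoke~\eqref{a31}; recall that~\eqref{a31} was itself assembled from Lemma~\ref{lemma2}, the Gauss equation~\eqref{a25}, the curvature asymptotics~\eqref{a28} for $\mathcal{K}^{\Sigma,L}$, the scalar curvature formula~\eqref{a1}, Proposition~\ref{prop1}, and the measure limit~\eqref{a2}.

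Finally I would justify interchanging the limit with the integral over $B\times F$. Away from the characteristic set all the quantities involved are smooth and the pointwise asymptotics of the preceding sections apply directly; near $C(F)$ the hypotheses $\mathcal{H}^1(C(F))=0$ and the local summability of $\|\nabla_Hu\|_H^{-1}$ with respect to the Euclidean $2$-dimensional Hausdorff measure (cf.~\cite{BTV}) ensure that the integrands remain integrable there and that the singular locus contributes nothing in the limit, so a dominated convergence argument goes through. Combining the three steps yields the two asserted identities. I expect the only genuinely delicate point to be this last one: tracking the correct power of $L$ is mechanical, but the justification that the characteristic set does not obstruct passing the limit under the integral sign relies essentially on the summability hypothesis and must be handled carefully.
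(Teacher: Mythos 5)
Your proposal is correct and follows essentially the same route as the paper: the paper's proof consists precisely of combining the recalled Dabrowski--Sitarz--Zalecki theorem with the Case (c) limit computation~\eqref{a31}, which is exactly what you do. Your additional remarks --- identifying the $L^{-1/2}$ from the measure normalization~\eqref{a2} and the $L^{-1}$ from the growth $g^L(X,Y)\sim a_3b_3L$ in~\eqref{a5}, and the role of the hypotheses on $C(F)$ in passing the limit under the integral --- merely make explicit what the paper leaves implicit in~\eqref{a31}.
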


\section{ The sub-Riemannian limit of the Connes conformal invariants in the twisted BCV spaces }
In this section, we will compute the sub-Riemannian limit of the Connes conformal invariants on 4-dimensional Riemannian manifolds without boundary.\\
\indent Using Eq.(2.33) in \cite{Co1}, we get the following theorem on 4-dimensional manifolds:
\begin{thm}\cite{Co1}
There exists a universal trilinear form in the curvature
$r$ and the covectors $df_1,df_2$ such that, in full generality, one has:
\begin{align}\label{aaaa}
\Omega_4(f_1,f_2)_g&=\Big[\frac{1}{3}r\langle df_1,df_2\rangle+\Delta\langle df_1,df_2\rangle +\langle \nabla df_1,\nabla df_2\rangle -\frac{1}{2}\Delta f_1\Delta f_2\Big]Vol_{M,g},
\end{align}
where $\Delta$ is the Laplacian, $\nabla$ denotes the covariant derivative and $\Omega_4(f_1,f_2)_g$ is a conformal invariant. Then $${\rm Wres}\big(f_0[\widetilde{F},f_1][\widetilde{F},f_2]\big)=\int_Mf_0\Omega_4(f_1,f_2)_g,$$
where the definition of $\widetilde{F}$ (see \cite{Co1}) and ${\rm Wres}\big(f_0[\widetilde{F},f_1][\widetilde{F},f_2]\big)$ is a Hochschild 2-cocyle on $C^{\infty}(M)$.
\end{thm}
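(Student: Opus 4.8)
Although this is Connes' theorem (Eq.~(2.33) of \cite{Co1}), we indicate the line of argument. Recall that $\widetilde{F}$ is Connes' order-zero operator acting on the middle-degree forms $\Lambda^{2}T^{*}M$ of the $4$-manifold $(M,g)$, which can be taken as $\mathrm{sign}(d\delta-\delta d)$ (equivalently $2\Pi-1$ with $\Pi$ the Hodge projection onto exact $2$-forms; see \cite{Co1} for the precise normalization), so that its principal symbol $q_{0}(x,\xi)$ is the $\xi$-dependent reflection of $\Lambda^{2}T^{*}_{x}M$ and, crucially, in dimension $4$ the operator $\widetilde{F}$ depends only on the conformal class $[g]$ since the $L^{2}$-pairing on middle forms is conformally invariant. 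Because $\widetilde{F}$ has order $0$, each $[\widetilde{F},f]$ has order $-1$, so $f_{0}[\widetilde{F},f_{1}][\widetilde{F},f_{2}]$ has order $-2$ on the $4$-dimensional $M$ and its noncommutative residue equals, up to the usual constant, the integral over the cosphere bundle $S^{*}M$ of $\mathrm{tr}\,\sigma_{-4}\bigl(f_{0}[\widetilde{F},f_{1}][\widetilde{F},f_{2}]\bigr)$, the symbol component homogeneous of degree $-4$. The plan is to compute this symbol, integrate it over $S^{*}M$, recognize the resulting universal trilinear density, and then settle the conformal invariance and cocycle claims.

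One would first expand $\sigma(\widetilde{F})\sim q_{0}(x,\xi)+q_{-1}(x,\xi)+q_{-2}(x,\xi)+\cdots$, where $q_{-1},q_{-2},\dots$ come from a parametrix of $(d\delta-\delta d)^{-1}$ on $\Lambda^{2}$ (equivalently from the resolvent expansion of the form Laplacian, so curvature first enters through $q_{-1}$). By the commutator symbol calculus, $\sigma([\widetilde{F},f])$ has leading part $-\sqrt{-1}\sum_{k}\partial_{\xi_{k}}q_{0}\,\partial_{x_{k}}f$ of order $-1$ and subleading parts of orders $-2,-3$ built from higher $\xi$-derivatives of $q_{0}$, from $\xi$-derivatives of $q_{-1},q_{-2}$, and from the first through third $x$-derivatives of $f$. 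Writing $a^{(1)},a^{(2)}$ for these symbols with $f=f_{1},f_{2}$, composing them, and multiplying on the left by $f_{0}$ (a multiplication operator, hence introducing no composition correction), the degree-$(-4)$ piece is assembled from $a^{(1)}_{-1}a^{(2)}_{-3}$, $a^{(1)}_{-2}a^{(2)}_{-2}$, $a^{(1)}_{-3}a^{(2)}_{-1}$ together with the composition corrections $\partial_{\xi_{k}}a^{(1)}_{-1}\,D_{x_{k}}a^{(2)}_{-2}$, $\partial_{\xi_{k}}a^{(1)}_{-2}\,D_{x_{k}}a^{(2)}_{-1}$ and $\tfrac12\partial_{\xi_{k}}\partial_{\xi_{l}}a^{(1)}_{-1}\,D_{x_{k}}D_{x_{l}}a^{(2)}_{-1}$. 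Taking the fibrewise trace over $\Lambda^{2}$ and computing the angular integrals $\int_{S^{3}}\xi_{i_{1}}\cdots\xi_{i_{2k}}\,d\xi$, which symmetrize powers of $g^{ij}$, collapses everything to a universal linear combination of $r\langle df_{1},df_{2}\rangle$, $\langle\nabla df_{1},\nabla df_{2}\rangle$, $\Delta f_{1}\Delta f_{2}$ and $\Delta\langle df_{1},df_{2}\rangle$ times $\mathrm{vol}_{M,g}$, the scalar curvature $r$ entering through $q_{-1}$ (via the Weitzenb\"ock term of the Laplacian on $\Lambda^{2}$) combined with those angular and trace factors.

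It then remains to pin down the four coefficients and verify the last two claims. For the coefficients one carries the computation out in Riemann normal coordinates about an arbitrary point with $f_{1},f_{2}$ realizing prescribed $2$-jets; since both sides are local natural densities this forces $(\tfrac13,1,1,-\tfrac12)$, and the resulting density is recognized as the standard conformally invariant pairing of functions, whose invariance can also be verified directly from the Weyl transformation laws for $r$, $\Delta f$, the Hessian $\nabla df$ and $\mathrm{vol}$ (the weight-zero obstruction terms cancel precisely for this combination). Conformal invariance also follows structurally: $\widetilde{F}$ depends on $[g]$ only up to conjugation by a conformal-weight multiplication operator $U$, and since $U$ commutes with multiplication by $f_{0},f_{1},f_{2}$ we have $f_{0}[U\widetilde{F}U^{-1},f_{1}][U\widetilde{F}U^{-1},f_{2}]=U\bigl(f_{0}[\widetilde{F},f_{1}][\widetilde{F},f_{2}]\bigr)U^{-1}$, so the trace property of the noncommutative residue leaves ${\rm Wres}$ unchanged; as the residue density of a fixed operator is canonically attached to it, $\Omega_{4}(f_{1},f_{2})_{g}$ is genuinely conformally invariant. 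Finally, $f_{0}\mapsto{\rm Wres}\bigl(f_{0}[\widetilde{F},f_{1}][\widetilde{F},f_{2}]\bigr)$ is a Hochschild $2$-cocycle by the familiar argument from the Leibniz rule $[\widetilde{F},fg]=f[\widetilde{F},g]+[\widetilde{F},f]g$ and the trace property of ${\rm Wres}$, and one checks $b\Omega_{4}=0$ directly on the displayed density using the Leibniz rule for $d$ and integration by parts.

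The main obstacle is the degree-$(-4)$ symbol bookkeeping, and within it the accurate determination of the subprincipal symbol $q_{-1}$ of $\widetilde{F}$: this is what produces the coefficient $\tfrac13$ of $r\langle df_{1},df_{2}\rangle$, and it requires a careful parametrix of $(d\delta-\delta d)^{-1}$ on $\Lambda^{2}T^{*}M$ together with Connes' precise normalization of $\widetilde{F}$. A more conceptual point is that the residue density itself, not merely its cohomology class, is conformally invariant; this rests on $\widetilde{F}$ depending only on $[g]$ and on the canonical, local nature of the Wodzicki residue density.
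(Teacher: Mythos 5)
The paper does not actually prove this theorem: it is quoted verbatim from Connes \cite{Co1} (his Eq.\ (2.33)), so there is no in-paper argument to compare yours against. Judged on its own, your sketch reproduces the correct architecture of Connes' proof: $\widetilde F=2P-1$ acting on middle-degree forms of the $4$-manifold is an order-zero pseudodifferential operator depending only on the conformal class, each $[\widetilde F,f_i]$ has order $-1$, the product $f_0[\widetilde F,f_1][\widetilde F,f_2]$ has order $-2$, and ${\rm Wres}$ extracts the fibre integral over $S^*M$ of the degree $-4$ homogeneous symbol, which is a universal natural bilinear density in the $2$-jets of $f_1,f_2$ and the metric; the Hochschild cocycle property then follows from the trace property of ${\rm Wres}$ and the Leibniz rule for commutators.

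Three points need correction or completion before this becomes a proof. First, the entire content of the theorem is the determination of the coefficients $(\frac13,1,1,-\frac12)$, and your sketch defers exactly this to an unexecuted normal-coordinate computation; moreover, naturality and locality alone do not ``force'' these values. The space of natural bilinear local densities of the right weight contains, besides the four displayed terms, candidates such as ${\rm Ric}(\nabla f_1,\nabla f_2)\,Vol_{M,g}$, and showing that only the scalar curvature survives (as the statement's ``universal trilinear form in $r$'' asserts) is itself part of the symbol computation, not a consequence of abstract naturality. Second, in the expansion $\sigma(\widetilde F)\sim q_0+q_{-1}+q_{-2}+\cdots$ the curvature first enters at $q_{-2}$, not $q_{-1}$: in normal coordinates $q_{-1}$ is built from first derivatives of the metric, which vanish at the centre, while the Weitzenb\"ock contribution of the form Laplacian is a second-order effect; this matters because it changes which cross-terms in your degree $-4$ bookkeeping can carry the $r\langle df_1,df_2\rangle$ contribution. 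Third, on middle forms of a $4$-manifold the $L^2$ inner product is exactly conformally invariant and ${\rm Im}\,d$ is metric-independent, so $\widetilde F$ is literally unchanged under $g\mapsto e^{2\varphi}g$; no conjugating weight operator $U$ is needed, and the conformal invariance of the residue density is then immediate from the canonical locality of the Wodzicki residue. With the coefficient computation actually carried out (or explicitly cited to Connes, as the paper does), your outline would close; as written it is a correct road map rather than an argument.
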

On $n$-dimensonal manifolds, let $\{e_1^*, \cdot\cdot\cdot, e_m^*\}$ be an orthonormal basis on $T^*B$ and $\{f\overline{e}_1^*, \cdot\cdot\cdot, f\overline{e}_n^*\}$ be an  orthonormal basis on $T^*F$ with respect to $g_L$, then we know
\begin{align}
Vol_{M,g}=&e_1^*\wedge\cdot\cdot\cdot\wedge e_m^*\wedge f\overline{e}_1^*\wedge\cdot\cdot\cdot\wedge f\overline{e}_n^*\nonumber\\
=&f^ndvol_Bdvol_F.
\end{align}

Firstly, for $n=2$, we will compute $$\frac{1}{3}r\langle df_1,df_2\rangle ,~~\Delta\langle df_1,df_2\rangle ,~~\langle \nabla df_1,\nabla df_2\rangle ,~~-\frac{1}{2}\Delta f_1\Delta f_2$$ respectively.\\

\noindent (1) For $df_1\in T^*M$, we have
\begin{align}
df_1=\sum_{j=1}^me_j(f_1)e_j^*+\sum_{k=1}^n\overline{e}_k(f_1)\overline{e}_k^*.
\end{align}
So
\begin{align}
\langle df_1,df_2\rangle =&\sum_{j=1}^me_j(f_1)e_j(f_2)+\sum_{k=1}^n\frac{1}{f^2}\overline{e}_k(f_1)\overline{e}_k(f_2)\nonumber\\
=&\langle d_Bf_1,d_Bf_2\rangle _B+\frac{1}{f^2}\langle d_Ff_1,d_Ff_2\rangle _F.
\end{align}
As a result
\begin{align}\label{a9}
\frac{1}{3}r\langle df_1,df_2\rangle =&\frac{1}{3}S\Big(\langle d_Bf_1,d_Bf_2\rangle _B+\frac{1}{f^2}\langle d_Ff_1,d_Ff_2\rangle _F\Big)\nonumber\\
=&\frac{1}{3}\Big(S^B+2\frac{\widetilde{l}}{f}\triangle_Bf+\frac{S^{F}}{f^2}+\widetilde{l}(\widetilde{l}-1)\frac{|{\rm grad}_Bf|^2_B}{f^2}\Big)\langle d_Bf_1,d_Bf_2\rangle _B\nonumber\\
&+\frac{1}{3}\Big(\frac{S^B}{f^2}+2\frac{\widetilde{l}}{f^3}\triangle_Bf+\frac{S^{F}}{f^4}+\widetilde{l}(\widetilde{l}-1)\frac{|{\rm grad}_Bf|^2_B}{f^4}\Big)\langle d_Ff_1,d_Ff_2\rangle _F\nonumber\\
:=&c_1.
\end{align}
\noindent (2) By Proposition \ref{prop2}, we have
\begin{align}\label{a8}
\Delta=&-\sum_{j=1}^{m+n}e_je_j+\sum_{j=1}^{m+n}\nabla_{e_j}^Le_j\nonumber\\
=&-\sum_{j=1}^{m}e_je_j-\sum_{k=1}^{n}\frac{1}{f}\overline{e}_k\frac{1}{f}\overline{e}_k+\sum_{j=1}^{m}\nabla_{e_j}^Be_j+\sum_{k=1}^{n}\nabla_{\frac{1}{f}\overline{e}_k}\frac{1}{f}\overline{e}_k\nonumber\\
=&-\sum_{j=1}^{m}e_je_j+\sum_{j=1}^{m}\nabla_{e_j}^Be_j-\sum_{k=1}^{n}\frac{1}{f^2}\overline{e}_k\overline{e}_k-\sum_{k=1}^{n}\frac{1}{f}\overline{e}_k(\frac{1}{f})\overline{e}_k+\frac{1}{f}\sum_{k=1}^{n}\nabla_{\overline{e}_k}\frac{1}{f}\overline{e}_k\nonumber\\
=&\Delta_B-\frac{1}{f^2}\sum_{k=1}^{n}\overline{e}_k\overline{e}_k+\frac{1}{f^2}\sum_{k=1}^{n}\nabla_{\overline{e}_k}\overline{e}_k\nonumber\\
=&\Delta_B-\frac{1}{f^2}\sum_{k=1}^{n}\overline{e}_k\overline{e}_k+\frac{1}{f^2}\sum_{k=1}^{n}\Big(2\overline{e}_k({\rm ln}f)\overline{e}_k-\frac{g_{F}(\overline{e}_k,\overline{e}_k)}{f}{\rm grad}_{F}f-fg_{F}(\overline{e}_k,\overline{e}_k){\rm grad}_{B}f+\nabla^{F}_{\overline{e}_k}\overline{e}_k\Big)\nonumber\\
=&\Delta_B+\frac{1}{f^2}\Delta_F+\frac{2-n}{f^3}{\rm grad}_{F}f-\frac{n}{f}{\rm grad}_{B}f.
\end{align}
So
\begin{align}\label{a10}
\Delta\langle df_1,df_2\rangle
=&\Big(\Delta_B+\frac{1}{f^2}\Delta_F+\frac{2-n}{f^3}{\rm grad}_{F}f-\frac{n}{f}{\rm grad}_{B}f\Big)\Big(\langle d_Bf_1,d_Bf_2\rangle _B+\frac{1}{f^2}\langle d_Ff_1,d_Ff_2\rangle _F\Big)\nonumber\\
=&\Big(\Delta_B+\frac{1}{f^2}\Delta_F+\frac{2-n}{f^3}{\rm grad}_{F}f-\frac{n}{f}{\rm grad}_{B}f\Big)\langle d_Bf_1,d_Bf_2\rangle _B+\Delta_B(\frac{1}{f^2})\langle d_Ff_1,d_Ff_2\rangle _F\nonumber\\
&+\frac{1}{f^2}\Delta_B\big(\langle d_Ff_1,d_Ff_2\rangle _F\big)-2\Big\langle {\rm grad}_{B}(\frac{1}{f^2}),{\rm grad}_{B}\big(\langle d_Ff_1,d_Ff_2\rangle _F\big)\Big\rangle _B+\frac{1}{f^2}\Big[\Delta_F(\frac{1}{f^2})\nonumber\\
&\times\langle d_Ff_1,d_Ff_2\rangle _F+\frac{1}{f^2}\Delta_F\big(\langle d_Ff_1,d_Ff_2\rangle _F\big)-2\Big\langle {\rm grad}_{F}(\frac{1}{f^2}),{\rm grad}_{F}\big(\langle d_Ff_1,d_Ff_2\rangle _F\big)\Big\rangle _F\Big]\nonumber\\
&+\frac{2-n}{f^3}{\rm grad}_{F}f(\frac{1}{f^2})\langle d_Ff_1,d_Ff_2\rangle _F+\frac{2-n}{f^5}{\rm grad}_{F}f\big(\langle d_Ff_1,d_Ff_2\rangle _F\big)-\frac{n}{f}{\rm grad}_{B}f(\frac{1}{f^2})\nonumber\\
&\times\langle d_Ff_1,d_Ff_2\rangle _F-\frac{n}{f^3}{\rm grad}_{B}f\big(\langle d_Ff_1,d_Ff_2\rangle _F\big)\nonumber\\
:=&c_2.
\end{align}
\noindent (3) By Proposition \ref{prop3}, we have
\begin{align}
\nabla df_1=\sum_{j=1}^me_j^* \otimes \nabla_{e_j}^*(df_1)+\sum_{k=1}^n\overline{e}_k^* \otimes \nabla_{\overline{e}_k}^*(df_1).
\end{align}
By further calculation, we have
\begin{align}
\nabla_{e_j}^*(df_1)=&\nabla_{e_j}^*\Big(\sum_{\alpha=1}^me_\alpha(f_1)e_\alpha^*+\sum_{\beta=1}^n\overline{e}_\beta(f_1)\overline{e}_\beta^*\Big)\nonumber\\
=&\sum_{\alpha=1}^me_j\big(e_\alpha(f_1)\big)e_\alpha^*+\sum_{\alpha=1}^me_\alpha(f_1)\nabla_{e_j}^*e_\alpha^*
+\sum_{\beta=1}^ne_j\big(\overline{e}_\beta(f_1)\big)\overline{e}_\beta^*+\sum_{\beta=1}^n\overline{e}_\beta(f_1)\nabla_{e_j}^*\overline{e}_\beta^*\nonumber\\
=&\sum_{\alpha=1}^me_j\big(e_\alpha(f_1)\big)e_\alpha^*+\sum_{\alpha=1}^me_\alpha(f_1)\nabla_{e_j}^{B,*}e_\alpha^*
+\sum_{\beta=1}^ne_j\big(\overline{e}_\beta(f_1)\big)\overline{e}_\beta^*-\sum_{\beta=1}^n\overline{e}_\beta(f_1)\frac{e_j(f)}{f}\overline{e}_\beta^*,
\end{align}
and
\begin{align}
\nabla_{\overline{e}_k}^*(df_1)=&\nabla_{\overline{e}_k}^*\Big(\sum_{\alpha=1}^me_\alpha(f_1)e_\alpha^*+\sum_{\beta=1}^n\overline{e}_\beta(f_1)\overline{e}_\beta^*\Big)\nonumber\\
=&\sum_{\alpha=1}^m\overline{e}_k\big(e_\alpha(f_1)\big)e_\alpha^*+\sum_{\alpha=1}^me_\alpha(f_1)\nabla_{\overline{e}_k}^*e_\alpha^*
+\sum_{\beta=1}^n\overline{e}_k\big(\overline{e}_\beta(f_1)\big)\overline{e}_\beta^*+\sum_{\beta=1}^n\overline{e}_\beta(f_1)\nabla_{\overline{e}_k}^*\overline{e}_\beta^*\nonumber\\
=&\sum_{\alpha=1}^m\overline{e}_k\big(e_\alpha(f_1)\big)e_\alpha^*+f\sum_{\alpha=1}^me_\alpha(f_1)e_\alpha(f)\overline{e}_k^*
+\sum_{\beta=1}^n\overline{e}_k\big(\overline{e}_\beta(f_1)\big)\overline{e}_\beta^*-\sum_{\beta=1}^n\overline{e}_\beta(f_1)\overline{e}_k(lnf)\overline{e}_\beta^*\nonumber\\
&-\sum_{\beta=1}^n\overline{e}_\beta(f_1)\delta_k^{\beta}\big({\rm grad}_{F}(lnf)\big)^*-\frac{1}{f}\sum_{\beta=1}^n\overline{e}_\beta(f_1)\delta_k^{\beta}\big({\rm grad}_{B}f\big)^*+\frac{1}{f}\sum_{\beta=1}^n\overline{e}_\beta(f_1)\overline{e}_\beta(f)\overline{e}_k^*\nonumber\\
&+\sum_{\beta=1}^n\overline{e}_\beta(f_1)\nabla_{\overline{e}_k}^{F,*}\overline{e}_\beta^*.
\end{align}
These lead to
\begin{align}
\nabla df_1=&\sum_{j,\alpha=1}^me_j\big(e_\alpha(f_1)\big)e_j^*\otimes e_\alpha^*+\sum_{j,\alpha=1}^me_\alpha(f_1)e_j^*\otimes \nabla_{e_j}^{B,*}e_\alpha^*+\sum_{j=1}^m\sum_{\beta=1}^ne_j\big(\overline{e}_\beta(f_1)\big)e_j^*\otimes \overline{e}_\beta^*\nonumber\\
&-\frac{1}{f}\sum_{j=1}^m\sum_{\beta=1}^n\overline{e}_\beta(f_1)e_j(f)e_j^*\otimes \overline{e}_\beta^*+\sum_{\alpha=1}^m\sum_{k=1}^n\overline{e}_k\big(e_\alpha(f_1)\big)\overline{e}_k^*\otimes e_\alpha^*\nonumber\\
&+f\sum_{\alpha=1}^m\sum_{k=1}^ne_\alpha(f_1)e_\alpha(f)\overline{e}_k^*\otimes\overline{e}_k^*+\sum_{k,\beta=1}^n\overline{e}_k\big(\overline{e}_\beta(f_1)\big)\overline{e}_k^*\otimes\overline{e}_\beta^*\nonumber\\
&-\big({\rm grad}_{F}(lnf)\big)^*\otimes\big({\rm grad}_{F}f_1\big)^*-\big({\rm grad}_{F}f_1\big)^*\otimes\big({\rm grad}_{F}(lnf)\big)^*\nonumber\\
&-\frac{1}{f}\big({\rm grad}_{F}f_1\big)^*\otimes\big({\rm grad}_{B}f\big)^*
+\frac{1}{f}\sum_{k,\beta=1}^n\overline{e}_\beta(f_1)\overline{e}_\beta(f)\overline{e}_k^*\otimes\overline{e}_k^*\nonumber\\
&+\sum_{k,\beta=1}^n\overline{e}_\beta(f_1)\overline{e}_k^*\otimes\nabla_{\overline{e}_k}^{F,*}\overline{e}_\beta^*.
\end{align}
Moreover, by $$\big\langle e_j^*\otimes e_k^*,e_\alpha^*\otimes e_\beta^*\big\rangle_f =\delta_j^\alpha\delta_k^\beta$$ and $$\big\langle \overline{e}_j^*\otimes \overline{e}_k^*,\overline{e}_\alpha^*\otimes \overline{e}_\beta^*\big\rangle_f =\frac{1}{f^4}\delta_j^\alpha\delta_k^\beta,$$ we obtain
\begin{align}\label{a11}
&\langle \nabla df_1,\nabla df_2\rangle\nonumber\\
=&\sum_{j,\alpha=1}^me_j\big(e_\alpha(f_1)\big)e_{j}\big(e_{\alpha}(f_2)\big)+\sum_{j,\alpha,\alpha'=1}^me_j\big(e_\alpha(f_1)\big)e_{\alpha'}(f_2)\big\langle e_\alpha^*,\nabla_{e_{j}}^{B,*}e_{\alpha'}^*\big\rangle\nonumber\\
&+\sum_{j,\alpha,\alpha'=1}^me_\alpha(f_1)e_{j}\big(e_{\alpha'}(f_2)\big)\big\langle \nabla_{e_{j}}^{B,*}e_{\alpha}^*,e_{\alpha'}^*\big\rangle+\sum_{j,\alpha,\alpha'=1}^me_\alpha(f_1)e_{\alpha'}(f_2)\big\langle \nabla_{e_{j}}^{B,*}e_{\alpha}^*,\nabla_{e_{j}}^{B,*}e_{\alpha'}^*\big\rangle\nonumber\\
&+\frac{1}{f^2}\sum_{j=1}^m\sum_{\beta=1}^ne_j\big(\overline{e}_\beta(f_1)\big)e_{j}\big(\overline{e}_{\beta}(f_2)\big)-\frac{1}{f^3}\sum_{j=1}^m\sum_{\beta=1}^ne_j\big(\overline{e}_\beta(f_1)\big)\overline{e}_{\beta}(f_2)e_{j}(f)\nonumber\\
&-\frac{1}{f^3}\sum_{j=1}^m\sum_{\beta=1}^n\overline{e}_\beta(f_1)e_{j}(f)e_{j}\big(\overline{e}_{\beta}(f_2)\big)+\frac{1}{f^4}\sum_{j=1}^m\sum_{\beta=1}^n\overline{e}_\beta(f_1)e_{j}(f)\overline{e}_{\beta}(f_2)e_{j}(f)\nonumber\\
&+\frac{1}{f^2}\sum_{\alpha=1}^m\sum_{k=1}^n\overline{e}_k\big(e_\alpha(f_1)\big)\overline{e}_{k}\big(e_{\alpha}(f_2)\big)-\frac{1}{f^3}\sum_{\alpha=1}^m\sum_{k=1}^n\overline{e}_k\big(e_\alpha(f_1)\big)\overline{e}_k(f_2)e_{\alpha}(f)\nonumber\\
&+\frac{1}{f^2}\sum_{\alpha,\alpha'=1}^me_\alpha(f_1)e_{\alpha'}(f_2)e_\alpha(f)e_{\alpha'}(f)+\frac{1}{f^3}\sum_{\alpha=1}^m\sum_{k=1}^ne_\alpha(f_1)\overline{e}_{k}\big(\overline{e}_k(f_2)\big)e_{\alpha}(f)\nonumber\\
&-\frac{1}{f^3}\sum_{\alpha=1}^m\sum_{k=1}^ne_\alpha(f_1)\overline{e}_k(f_2)\overline{e}_{k}(lnf)e_\alpha(f)-\frac{1}{f^3}\sum_{\alpha=1}^m\sum_{k=1}^ne_\alpha(f_1)\overline{e}_k(f_2)\overline{e}_{k}(lnf)e_\alpha(f)\nonumber\\
&+\frac{1}{f^4}\sum_{\alpha=1}^m\sum_{\beta'=1}^ne_\alpha(f_1)\overline{e}_{\beta'}(f_2)e_\alpha(f)\overline{e}_{\beta'}(f)+\frac{1}{f^3}\sum_{\alpha=1}^m\sum_{k,\beta'=1}^ne_\alpha(f_1)\overline{e}_{\beta'}(f_2)e_\alpha(f)\big\langle \overline{e}_k^*,\nabla_{\overline{e}_{k}}^{F,*}\overline{e}_{\beta'}^*\big\rangle_F\nonumber\\
&+\frac{1}{f^3}\sum_{\alpha'=1}^m\sum_{k=1}^n\overline{e}_k\big(\overline{e}_k(f_1)\big)e_{\alpha'}(f_2)e_{\alpha'}(f)+\frac{1}{f^4}\sum_{k,\beta=1}^n\overline{e}_k\big(\overline{e}_\beta(f_1)\big)\overline{e}_{k}\big(\overline{e}_{\beta}(f_2)\big)\nonumber\\
&-\frac{1}{f^4}\sum_{k,\beta=1}^n\overline{e}_k\big(\overline{e}_\beta(f_1)\big)\overline{e}_{\beta}(f_2)\overline{e}_{k}(lnf)-\frac{1}{f^4}\sum_{k,\beta=1}^n\overline{e}_k\big(\overline{e}_\beta(f_1)\big)\overline{e}_k(f_2)\overline{e}_{\beta}(lnf)\nonumber\\
&+\frac{1}{f^5}\sum_{k,\beta'=1}^n\overline{e}_k\big(\overline{e}_k(f_1)\big)\overline{e}_{\beta'}(f_2)\overline{e}_{\beta'}(f)+\frac{1}{f^4}\sum_{k,\beta,\beta'=1}^n\overline{e}_k\big(\overline{e}_\beta(f_1)\big)\overline{e}_{\beta'}(f_2)\big\langle \overline{e}_k^*,\nabla_{\overline{e}_{k}}^{F,*}\overline{e}_{\beta'}^*\big\rangle_F\nonumber\\
&-\frac{1}{f^3}\sum_{\alpha'=1}^m\sum_{k=1}^n\overline{e}_k(f_1)e_{\alpha'}(f_2)\overline{e}_{k}(lnf)e_{\alpha'}(f)-\frac{1}{f^4}\sum_{k,\beta=1}^n\overline{e}_\beta(f_1)\overline{e}_{k}\big(\overline{e}_{\beta}(f_2)\big)\overline{e}_{k}(lnf)\nonumber\\
&+\frac{1}{f^4}\sum_{k,\beta=1}^n\overline{e}_\beta(f_1)\overline{e}_{\beta}(f_2)\overline{e}_{k}(lnf)\overline{e}_{k}(lnf)+\frac{1}{f^4}\sum_{k,\beta=1}^n\overline{e}_\beta(f_1)\overline{e}_k(f_2)\overline{e}_{k}(lnf)\overline{e}_{\beta}(lnf)\nonumber\\
&-\frac{1}{f^5}\sum_{k,\beta'=1}^n\overline{e}_k(f_1)\overline{e}_{\beta'}(f_2)\overline{e}_{k}(lnf)\overline{e}_{\beta'}(f)-\frac{1}{f^4}\sum_{k,\beta,\beta'=1}^n\overline{e}_\beta(f_1)\overline{e}_{\beta'}(f_2)\overline{e}_{k}(lnf)\langle \overline{e}_\beta^*,\nabla_{\overline{e}_{k'}}^{F,*}\overline{e}_{\beta'}^*\big\rangle_F\nonumber\\
&-\frac{1}{f^3}\sum_{\alpha'=1}^m\sum_{k=1}^n\overline{e}_k(f_1)e_{\alpha'}(f_2)e_{\alpha'}(f)\overline{e}_{k}(lnf)-\frac{1}{f^4}\sum_{k,\beta'=1}^n\overline{e}_k(f_1)\overline{e}_{k}\big(\overline{e}_{\beta'}(f_2)\big)\overline{e}_{\beta'}(lnf)\nonumber\\
&+\frac{1}{f^4}\sum_{k,\beta=1}^n\overline{e}_\beta(f_1)\overline{e}_k(f_2)\overline{e}_{k}(lnf)\overline{e}_k(lnf)+\frac{1}{f^4}\sum_{k=1}^n\overline{e}_k(f_1)\overline{e}_k(f_2)\big|{\rm grad}_F(lnf)\big|^2_F\nonumber\\
&-\frac{1}{f^5}\sum_{k,\beta'=1}^n\overline{e}_k(f_1)\overline{e}_{\beta'}(f_2)\overline{e}_{\beta'}(f)\overline{e}_{k}(lnf)-\frac{1}{f^4}\sum_{k,\beta'=1}^n\overline{e}_k(f_1)\overline{e}_{\beta'}(f_2)\big\langle\big({\rm grad}_F(lnf)\big)^*,\nabla_{\overline{e}_{k}}^{F,*}\overline{e}_{\beta'}^*\big\rangle_F\nonumber\\
&-\frac{1}{f^3}\sum_{\alpha'=1}^m\sum_{k=1}^n\overline{e}_k(f_1)\overline{e}_{k}\big(e_{\alpha'}(f_2)\big)e_{\alpha'}(f)+\frac{1}{f^4}\sum_{k=1}^n\overline{e}_k(f_1)\overline{e}_k(f_2)\big|{\rm grad}_Bf\big|^2_B\nonumber\\
&+\frac{1}{f^4}\sum_{\alpha'=1}^m\sum_{\beta=1}^n\overline{e}_\beta(f_1)e_{\alpha'}(f_2)e_{\alpha'}(f)\overline{e}_\beta(f)+\frac{1}{f^5}\sum_{k,\beta=1}^n\overline{e}_\beta(f_1)\overline{e}_{k}\big(\overline{e}_k(f_2)\big)\overline{e}_{\beta}(f)\nonumber\\
&-\frac{1}{f^5}\sum_{k,\beta=1}^n\overline{e}_\beta(f_1)\overline{e}_k(f_2)\overline{e}_{k}(lnf)\overline{e}_\beta(f)-\frac{1}{f^5}\sum_{k,\beta=1}^n\overline{e}_\beta(f_1)\overline{e}_k(f_2)\overline{e}_{\beta}(f)\overline{e}_{k}(lnf)\nonumber\\
&+\frac{1}{f^6}\sum_{\beta,\beta'=1}^n\overline{e}_\beta(f_1)\overline{e}_{\beta'}(f_2)\overline{e}_{\beta}(f)\overline{e}_{\beta'}(f)
+\frac{1}{f^5}\sum_{k,\beta,\beta'=1}^n\overline{e}_\beta(f_1)\overline{e}_{\beta'}(f_2)\overline{e}_{\beta}(f)\big\langle \overline{e}_k^*,\nabla_{\overline{e}_{k}}^{F,*}\overline{e}_{\beta'}^*\big\rangle_F\nonumber\\
&+\frac{1}{f^3}\sum_{\alpha'=1}^m\sum_{k,\beta=1}^n\overline{e}_\beta(f_1)e_{\alpha'}(f_2)e_{\alpha'}(f)\big\langle \nabla_{\overline{e}_{k}}^{F,*}\overline{e}_{\beta}^*,\overline{e}_{k}^*\big\rangle_F
+\frac{1}{f^4}\sum_{k,\beta,\beta'=1}^n\overline{e}_\beta(f_1)\overline{e}_{k}\big(\overline{e}_{\beta'}(f_2)\big)\big\langle \nabla_{\overline{e}_{k}}^{F,*}\overline{e}_{\beta}^*,\overline{e}_{\beta'}^*\big\rangle_F\nonumber\\
&-\frac{1}{f^4}\sum_{k,\beta,\beta'=1}^n\overline{e}_\beta(f_1)\overline{e}_{\beta'}(f_2)\overline{e}_{k}(lnf)\big\langle \nabla_{\overline{e}_{k}}^{F,*}\overline{e}_{\beta}^*,\overline{e}_{\beta'}^*\big\rangle_F
-\frac{1}{f^4}\sum_{k,\beta=1}^n\overline{e}_\beta(f_1)\overline{e}_k(f_2)\big\langle \big(\nabla_{\overline{e}_{k}}^{F,*}\overline{e}_{\beta}^*,{\rm grad}_F(lnf)\big)^*\big\rangle_F\nonumber\\
&+\frac{1}{f^5}\sum_{k,\beta,\beta'=1}^n\overline{e}_\beta(f_1)\overline{e}_{\beta'}(f_2)\overline{e}_{\beta'}(f)\big\langle \nabla_{\overline{e}_{k}}^{F,*}\overline{e}_{\beta}^*,\overline{e}_{k}^*\big\rangle_F
+\frac{1}{f^4}\sum_{k,\beta,\beta'=1}^n\overline{e}_\beta(f_1)\overline{e}_{\beta'}(f_2)\big\langle \nabla_{\overline{e}_{k}}^{F,*}\overline{e}_{\beta}^*,\nabla_{\overline{e}_{k}}^{F,*}\overline{e}_{\beta'}^*\big\rangle_F\nonumber\\
:=&c_3.
\end{align}
\noindent (4 )By (\ref{a8}), a simple calculation gives
\begin{align}\label{a12}
-\frac{1}{2}\Delta f_1\Delta f_2=&-\frac{1}{2}\Big(\Delta_Bf_1+\frac{1}{f^2}\Delta_Ff_1+\frac{2-n}{f^3}{\rm grad}_{F}f(f_1)-\frac{n}{f}{\rm grad}_{B}f(f_1)\Big)\nonumber\\
&\times\Big(\Delta_Bf_2+\frac{1}{f^2}\Delta_Ff_2+\frac{2-n}{f^3}{\rm grad}_{F}f(f_2)-\frac{n}{f}{\rm grad}_{B}f(f_2)\Big)\nonumber\\
=&-\frac{1}{2}\bigg\{\Delta_Bf_1\Delta_Bf_2+\frac{1}{f^2}\big(\Delta_Bf_1\Delta_Ff_2+\Delta_Bf_2\Delta_Ff_1\big)+\frac{1}{f^4}\Delta_Ff_1\Delta_Ff_2\nonumber\\
&+\Big(\frac{2-n}{f^5}{\rm grad}_{F}f(f_2)-\frac{n}{f^3}\big(f^2\Delta_B+\Delta_F\big)(f_1){\rm grad}_{B}f(f_2)\Big)\big(f^2\Delta_B+\Delta_F\big)(f_1)\nonumber\\
&+\Big[\frac{2-n}{f^5}\big(f^2\Delta_B+\Delta_F\big)(f_1)-\frac{n(2-n)}{f^4}{\rm grad}_{B}f(f_2)+\frac{(2-n)^2}{f^6}{\rm grad}_{F}f(f_2)\Big]{\rm grad}_{F}f(f_1)\nonumber\\
&-\Big[\frac{n}{f^3}\big(f^2\Delta_B+\Delta_F\big)(f_2)-\frac{n^2}{f^2}{\rm grad}_{B}f(f_2)+\frac{n(2-n)}{f^4}{\rm grad}_{F}f(f_2)\Big]{\rm grad}_{B}f(f_1)\bigg\}\nonumber\\
:=&c_4.
\end{align}
Summing up (\ref{a9}), (\ref{a10}), (\ref{a11}) and (\ref{a12}), we obtain
\begin{prope}
For $M=B\times_{f}F$, $dimM=4$, then we have
\begin{align}
\Omega_4(f_1,f_2)_g=[c_1+c_2+c_3+c_4]f^ndvol_Bdvol_F.
\end{align}
\end{prope}
\indent Next, for $n=2$, we will compute the sub-Riemannian limit of $$\frac{1}{3}r\langle df_1,df_2\rangle ,~~\Delta\langle df_1,df_2\rangle ,~~\langle \nabla df_1,\nabla df_2\rangle ,~~-\frac{1}{2}\Delta f_1\Delta f_2$$ respectively.\\

\noindent (1) By direct computations, we have
\begin{align}
\langle d_Ff_1,d_Ff_2\rangle _F=&\big\langle \overline{e}_1^L(f_1)\overline{e}_1^{*,L}+\overline{e}_2^L(f_1)\overline{e}_2^{*,L},\overline{e}_1^L(f_2)\overline{e}_1^{*,L}+\overline{e}_2^L(f_2)\overline{e}_2^{*,L} \big\rangle _F\nonumber\\
=&\overline{e}_1^L(f_1)\overline{e}_1^L(f_2)+\overline{e}_2^L(f_1)\overline{e}_2^L(f_2).
\end{align}
Then
\begin{align}
{\rm lim}_{L\rightarrow +\infty}\langle d_Ff_1,d_Ff_2\rangle _F=\overline{e}_1(f_1)\overline{e}_1(f_2).
\end{align}
Therefore, we get
\begin{align}\label{a16}
&{\rm lim}_{L\rightarrow +\infty}\frac{1}{\sqrt{L}}\int_{B\times F^{L}}f_0\Big(\frac{1}{3}r\langle df_1,df_2\rangle\Big)dvol_{B}dvol_{L}\nonumber\\
=&\frac{1}{3}{\rm lim}_{L\rightarrow +\infty}\frac{1}{\sqrt{L}}\int_{B\times F^{L}}f_0\Big(r\langle df_1,df_2\rangle\Big)dvol_{B}dvol_{L}\nonumber\\
=&\frac{1}{3}\int_{B\times F}f_0\bigg[\Big(f^2S^{B}+2\widetilde{l}f\triangle_Bf+A_1+\widetilde{l}(\widetilde{l}-1)|{\rm grad}_Bf|^2_B\Big)\langle d_Bf_1,d_Bf_2\rangle_B\nonumber\\
&+\Big(S^{B}+\frac{2\widetilde{l}}{f}\triangle_Bf+\frac{A_1}{f^2}+\widetilde{l}(\widetilde{l}-1)\frac{|{\rm grad}_Bf|^2_B}{f^2}\Big)\overline{e}_1(f_1)\overline{e}_1(f_2)\bigg]dvol_{B}d\sigma_{\Sigma}\nonumber\\
:=&d_1.
\end{align}
\noindent (2)
\begin{align}
\Delta_F=-\overline{e}_1^L\overline{e}_1^L-\overline{e}_2^L\overline{e}_2^L+\nabla_{\overline{e}_1}^L\overline{e}_1+\nabla_{\overline{e}_2}^L\overline{e}_2.
\end{align}
An easy calculation gives
\begin{align}
\nabla_{\overline{e}_1}^L\overline{e}_1=&\big\langle \nabla_{\overline{e}_1}^L\overline{e}_1,\overline{e}_1 \big\rangle_L\overline{e}_1^L+\big\langle \nabla_{\overline{e}_1}^L\overline{e}_1,\overline{e}_2 \big\rangle_L\overline{e}_2^L\nonumber\\
=&\big\langle \nabla_{\overline{e}_1}^L\overline{e}_1,\overline{e}_2 \big\rangle_L\overline{e}_2^L.
\end{align}
Similarly,
\begin{align}
\nabla_{\overline{e}_2}^L\overline{e}_2=\big\langle \nabla_{\overline{e}_2}^L\overline{e}_2,\overline{e}_1 \big\rangle_L\overline{e}_1^L.
\end{align}
So
\begin{align}\label{a14}
{\rm lim}_{L\rightarrow +\infty}\Delta_F=-\overline{e}_1\overline{e}_1-\frac{2\tau X_3u}{l}\overline{e}_1.
\end{align}
Also,
\begin{align}
{\rm grad}_{F}f=&\big\langle {\rm grad}_{F}f,\overline{e}_1 \big\rangle_L\overline{e}_1^L+\big\langle {\rm grad}_{F}f,\overline{e}_2 \big\rangle_L\overline{e}_2^L\nonumber\\
=&\overline{e}_1^L(f)\overline{e}_1^L+\overline{e}_2^L(f)\overline{e}_2^L.
\end{align}
Then
\begin{align}\label{a15}
{\rm lim}_{L\rightarrow +\infty}{\rm grad}_{F}f=\overline{e}_1(f)\overline{e}_1.
\end{align}
Thus
\begin{align}\label{a17}
&{\rm lim}_{L\rightarrow +\infty}\frac{1}{\sqrt{L}}\int_{B\times F^{L}}f_0\Big(\Delta\langle df_1,df_2\rangle\Big)dvol_{B}dvol_{L}\nonumber\\
=&\int_{B\times F}f_0\bigg[\bigg(f^2\Delta_{B}-\overline{e}_1\overline{e}_1-\frac{2\tau X_3u}{l}\overline{e}_1+\frac{2-n}{f}\overline{e}_1(f)\overline{e}_1-nf{\rm grad}_Bf\bigg)\langle d_Bf_1,d_Bf_2\rangle _B\nonumber\\
&+\bigg(f^2\Delta_{B}(\frac{1}{f^2})\overline{e}_1(f_1)\overline{e}_1(f_2)+\Delta_{B}\big(\overline{e}_1(f_1)\overline{e}_1(f_2)\big)-2f^2\big\langle {\rm grad}_B(\frac{1}{f^2}),{\rm grad}_B\big(\overline{e}_1(f_1)\overline{e}_1(f_2)\big) \big\rangle_B\nonumber\\
&-\Big(\overline{e}_1\big(\overline{e}_1(\frac{1}{f^2})\big)+\frac{2\tau X_3u}{l}\overline{e}_1(\frac{1}{f^2})\Big)\overline{e}_1(f_1)\overline{e}_1(f_2)-\frac{1}{f^2}\big(\overline{e}_1\overline{e}_1+\frac{2\tau X_3u}{l}\overline{e}_1\big)\big(\overline{e}_1(f_1)\overline{e}_1(f_2)\big)\nonumber\\
&-2\big\langle \overline{e}_1(\frac{1}{f^2})\overline{e}_1,\overline{e}_1\big(\overline{e}_1(f_1)\overline{e}_1(f_2)\big)\overline{e}_1 \big\rangle+\frac{2-n}{f}\overline{e}_1(f)\overline{e}_1(\frac{1}{f^2})\overline{e}_1(f_1)\overline{e}_1(f_2)+\frac{2-n}{f^3}\overline{e}_1(f)\overline{e}_1\big(\overline{e}_1(f_1)\overline{e}_1(f_2)\big)\nonumber\\
&-nf{\rm grad}_Bf(\frac{1}{f^2})\overline{e}_1(f_1)\overline{e}_1(f_2)-\frac{n}{f}{\rm grad}_Bf\big(\overline{e}_1(f_1)\overline{e}_1(f_2)\bigg)\bigg]dvol_{B}d\sigma_{\Sigma}\nonumber\\
:=&d_2.
\end{align}
\noindent (3) By computations, we have
\begin{align}\label{a13}
\sum_{k,k',\beta'=1}^2\big\langle \nabla_{\overline{e}_{k'}}^{F,*}\overline{e}_{\beta'}^*,\overline{e}_{k}^*\big\rangle_F=&\big\langle \nabla_{\overline{e}_{1}}^{F,*}\overline{e}_{1}^*,\overline{e}_{1}^*\big\rangle_F+\big\langle \nabla_{\overline{e}_{1}}^{F,*}\overline{e}_{1}^*,\overline{e}_{2}^*\big\rangle_F+\big\langle \nabla_{\overline{e}_{1}}^{F,*}\overline{e}_{2}^*,\overline{e}_{1}^*\big\rangle_F+\big\langle \nabla_{\overline{e}_{1}}^{F,*}\overline{e}_{2}^*,\overline{e}_{2}^*\big\rangle_F\nonumber\\
&+\big\langle \nabla_{\overline{e}_{2}}^{F,*}\overline{e}_{1}^*,\overline{e}_{1}^*\big\rangle_F+\big\langle \nabla_{\overline{e}_{2}}^{F,*}\overline{e}_{1}^*,\overline{e}_{2}^*\big\rangle_F+\big\langle \nabla_{\overline{e}_{2}}^{F,*}\overline{e}_{2}^*,\overline{e}_{1}^*\big\rangle_F+\big\langle \nabla_{\overline{e}_{2}}^{F,*}\overline{e}_{2}^*,\overline{e}_{2}^*\big\rangle_F\nonumber\\
=&-\Big(\big\langle \nabla_{\overline{e}_{1}}^{F}\overline{e}_{1},\overline{e}_{2}\big\rangle_L+\big\langle \nabla_{\overline{e}_{2}}^{F}\overline{e}_{1},\overline{e}_{2}\big\rangle_L\Big)|\overline{e}_{1}|^2-\Big(\big\langle \nabla_{\overline{e}_{1}}^{F}\overline{e}_{2},\overline{e}_{1}\big\rangle_L+\big\langle \nabla_{\overline{e}_{2}}^{F}\overline{e}_{2},\overline{e}_{1}\big\rangle_L\Big)|\overline{e}_{2}|^2.
\end{align}
By further calculation, we obtain
\begin{align}
{\rm lim}_{L\rightarrow +\infty}\sum_{k,k',\beta'=1}^2\big\langle \nabla_{\overline{e}_{k'}}^{F,*}\overline{e}_{\beta'}^*,\overline{e}_{k}^*\big\rangle_F=\frac{\tau X_3u}{l}.
\end{align}
Moreover
\begin{align}
\big|{\rm grad}_F(lnf)\big|^2_F=&g\big\langle {\rm grad}_F(lnf),{\rm grad}_F(lnf)\big\rangle_F\nonumber\\
=&g\big\langle\overline{e}_1^L(lnf)\overline{e}_1^L+\overline{e}_2^L(lnf)\overline{e}_2^L,\overline{e}_1^L(lnf)\overline{e}_1^L+\overline{e}_2^L(lnf)\overline{e}_2^L \big\rangle_L\nonumber\\
=&|\overline{e}_1^L(lnf)|^2+|\overline{e}_2^L(lnf)|^2.
\end{align}
Then
\begin{align}
{\rm lim}_{L\rightarrow +\infty}\big|{\rm grad}_F(lnf)\big|^2_F=|\overline{e}_1(lnf)|^2.
\end{align}
Similar to (\ref{a13}), we also get
\begin{align}
{\rm lim}_{L\rightarrow +\infty}\sum_{k',\beta'=1}^2\big\langle\big({\rm grad}_F(lnf)\big)^*,\nabla_{\overline{e}_{k'}}^{F,*}\overline{e}_{\beta'}^*\big\rangle_F=-\overline{e}_1(lnf)\frac{\tau X_3u}{l},
\end{align}
and
\begin{align}
{\rm lim}_{L\rightarrow +\infty}\sum_{k,\beta,\beta'=1}^2\big\langle \nabla_{\overline{e}_{k}}^{F,*}\overline{e}_{\beta}^*,\nabla_{\overline{e}_{k}}^{F,*}\overline{e}_{\beta'}^*\big\rangle_F=5\big(\frac{\tau X_3u}{l}\big)^2.
\end{align}
Thus
\begin{align}\label{a18}
&{\rm lim}_{L\rightarrow +\infty}\frac{1}{\sqrt{L}}\int_{B\times F^{L}}f_0\Big(\langle \nabla df_1,\nabla df_2\rangle\Big)dvol_{B}dvol_{L}\nonumber\\
=&\int_{B\times F}f_0\bigg(f^2\sum_{j,\alpha,\alpha'=1}^m\Big[\Big(e_{j}\big(e_{\alpha}(f_2)\big)
+e_{\alpha'}(f_2)\big\langle e_\alpha^*,\nabla_{e_{j}}^{B,*}e_{\alpha'}^*\big\rangle\Big)e_j\big(e_\alpha(f_1)\big)+\Big(e_{j}\big(e_{\alpha'}(f_2)\big)\nonumber\\
&\times\big\langle \nabla_{e_{j}}^{B,*}e_{\alpha}^*,e_{\alpha'}^*\big\rangle
+e_{\alpha'}(f_2)\big\langle \nabla_{e_{j}}^{B,*}e_{\alpha}^*,\nabla_{e_{j}}^{B,*}e_{\alpha'}^*\big\rangle\Big)e_\alpha(f_1)\Big]+\frac{1}{f^2}\sum_{j=1}^m\Big(fe_j\big(\overline{e}_1(f_1)\big)\nonumber\\
&-\overline{e}_1(f_1)e_{j}(f)\Big)\Big(f e_{j}\big(\overline{e}_{1}(f_2)\big)-\overline{e}_{1}(f_2)e_{j}(f)\Big)+\sum_{\alpha=1}^m\overline{e}_1\big(e_\alpha(f_1)\big)\overline{e}_{1}\big(e_{\alpha}(f_2)\big)\nonumber\\
&+\frac{1}{f^2}\sum_{\alpha'=1}^m\Big(f\overline{e}_1\big(\overline{e}_1(f_1)\big)e_{\alpha'}(f_2)+f\overline{e}_1(f_1)\overline{e}_1\big(e_{\alpha'}(f_2)\big)-2f\overline{e}_1(lnf)\overline{e}_1(f_1)e_{\alpha'}(f_2)\nonumber\\
&+\overline{e}_1(f)\overline{e}_1(f_1)e_{\alpha'}(f_2)+f\frac{\tau X_3u}{l}\overline{e}_1(f_1)e_{\alpha'}(f_2)\Big)e_{\alpha'}(f)-\frac{1}{f^3}\Big(2f\overline{e}_1(lnf)-\overline{e}_1(f)-f\frac{\tau X_3u}{l}\Big)\nonumber\\
&\times\Big(\overline{e}_1\big(\overline{e}_1(f_1)\big)\overline{e}_1(f_2)+\overline{e}_1(f_1)\overline{e}_1\big(\overline{e}_1(f_2)\big)\Big)+\frac{1}{f^4}\Big(4f^2|\overline{e}_1(lnf)|^2+f^2|{\rm grad}_Bf|^2_B\nonumber\\
&+5\big(f\frac{\tau X_3u}{l}\big)^2+|\overline{e}_1(f)|^2-3f\overline{e}_1(lnf)\overline{e}_1(f)+2f\frac{\tau X_3u}{l}\overline{e}_1(f)\Big)\overline{e}_1(f_1)\overline{e}_1(f_2)\bigg)dvol_{B}d\sigma_{\Sigma}\nonumber\\
:=&d_3.
\end{align}
\noindent (4) By (\ref{a14}) and (\ref{a15}), a simple calculation gives
\begin{align}\label{a19}
&{\rm lim}_{L\rightarrow +\infty}\frac{1}{\sqrt{L}}\int_{B\times F^{L}}f_0\Big(-\frac{1}{2}\Delta f_1\Delta f_2\Big)dvol_{B}dvol_{L}\nonumber\\
=&-\frac{1}{2}{\rm lim}_{L\rightarrow +\infty}\frac{1}{\sqrt{L}}\int_{B\times F^{L}}f_0\Big(\Delta f_1\Delta f_2\Big)dvol_{B}dvol_{L}\nonumber\\
=&-\frac{1}{2}\int_{B\times F}f_0\bigg(f^2\Delta_Bf_1\Delta_Bf_2-\Big(\Delta_Bf_1\overline{e}_1\big(\overline{e}_1(f_2)\big)+\Delta_Bf_1\frac{2\tau X_3u}{l}\overline{e}_1(f_2)+\Delta_Bf_2\overline{e}_1\big(\overline{e}_1(f_1)\big)\nonumber\\
&+\Delta_Bf_2\frac{2\tau X_3u}{l}\overline{e}_1(f_1)\Big)+\frac{1}{f^2}\Big(\overline{e}_1\big(\overline{e}_1(f_1)\big)+\frac{2\tau X_3u}{l}\overline{e}_1(f_1)\Big)\Big(\overline{e}_1\big(\overline{e}_1(f_2)\big)+\frac{2\tau X_3u}{l}\overline{e}_1(f_2)\Big)\nonumber\\
&+\Big(\frac{2-n}{f^3}\overline{e}_1(f)\overline{e}_1(f_2)-\frac{n}{f}{\rm grad}_Bf(f_2)\Big)\Big(f^2\Delta_B-\overline{e}_1\overline{e}_1-\frac{2\tau X_3u}{l}\overline{e}_1\Big)(f_1)+\Big[\frac{2-n}{f^3}\big(f^2\Delta_B\nonumber\\
&-\overline{e}_1\overline{e}_1-\frac{2\tau X_3u}{l}\overline{e}_1\big)(f_1)-\frac{n(2-n)}{f^2}{\rm grad}_{B}f(f_2)+\frac{(2-n)^2}{f^4}\overline{e}_1(f)\overline{e}_1(f_2)\Big]\overline{e}_1(f)\overline{e}_1(f_1)\nonumber\\
&-\Big[\frac{n}{f}\big(f^2\Delta_B-\overline{e}_1\overline{e}_1-\frac{2\tau X_3u}{l}\overline{e}_1\big)(f_2)-n^2{\rm grad}_{B}f(f_2)+\frac{n(2-n)}{f^2}\overline{e}_1(f)\overline{e}_1(f_2)\Big]{\rm grad}_{B}f(f_1) \bigg)dvol_{B}d\sigma_{\Sigma}\nonumber\\
:=&d_4.
\end{align}
Summing up (\ref{a16}), (\ref{a17}), (\ref{a18}) and (\ref{a19}), we obtain
\begin{align}
{\rm lim}_{L\rightarrow +\infty}\frac{1}{\sqrt{L}}\int_Mf_0\Omega_4(f_1,f_2)_g=d_1+d_2+d_3+d_4.
\end{align}
\indent Hence, we obtain the following theorem.
\begin{thm}
Let $M=B\times_{f}F^L$ be 4-dimensional twisted BCV spaces, suppose that the characteristic set $C(F)$ satisfies $\mathcal{H}^1(C(F))=0$ and that $||\nabla_Hu||_H^{-1}$ is locally summable with respect to the Euclidean $2$-dimensional Hausdorff measure near the characteristic set $C(F)$, then the following equality holds:
\begin{align}
{\rm lim}_{L\rightarrow +\infty}\frac{1}{\sqrt{L}}{\rm Wres}\big(f_0[\widetilde{F},f_1][\widetilde{F},f_2]\big)=d_1+d_2+d_3+d_4
\end{align} which 
is a Hochschild 2-cocyle on $C^{\infty}(M)$.
\end{thm}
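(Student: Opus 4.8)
The plan is to combine Connes' identity from (\ref{aaaa}) with the decomposition $\Omega_4(f_1,f_2)_g=[c_1+c_2+c_3+c_4]f^ndvol_Bdvol_F$ established in the preceding Property, and then to pass to the sub-Riemannian limit term by term. First I would invoke the theorem of Connes recalled above: for the $4$-dimensional closed Riemannian manifold $M=B\times_{f}F^{L}$ equipped with $g_L$, one has
$${\rm Wres}\big(f_0[\widetilde{F},f_1][\widetilde{F},f_2]\big)=\int_Mf_0\,\Omega_4(f_1,f_2)_g,$$
and this functional is a Hochschild $2$-cocycle on $C^{\infty}(M)$. Substituting the Property, the right-hand side becomes $\int_{B\times F^{L}}f_0\big(c_1+c_2+c_3+c_4\big)f^n\,dvol_Bdvol_L$, where $c_1,c_2,c_3,c_4$ are the four pieces computed in (\ref{a9}), (\ref{a10}), (\ref{a11}), (\ref{a12}).

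Next I would multiply by $\frac{1}{\sqrt{L}}$ and let $L\rightarrow+\infty$. The essential point, exactly as in the Kastler--Kalau--Walze and Dabrowski--Sitarz--Zalecki parts of the paper, is that under the hypotheses $\mathcal{H}^1(C(F))=0$ and local summability of $||\nabla_Hu||_H^{-1}$ with respect to the Euclidean $2$-dimensional Hausdorff measure near $C(F)$, the rescaled volume form $\frac{1}{\sqrt{L}}dvol_L$ converges to $d\sigma_{\Sigma}=\overline{p}\,\omega_2\wedge\omega_3-\overline{q}\,\omega_1\wedge\omega_3$ in the sense of (\ref{a2}), while the powers of $L$ hidden in $c_1,\dots,c_4$ --- arising from $\widetilde{X_3}=L^{-1/2}X_3$, the fibre metric, the Levi-Civita connection of $g_L$ of Lemma \ref{lemma1}, and the curvature $\mathcal{K}^{\Sigma,L}$ --- are precisely balanced, so that each integrand acquires an integrable limit away from the characteristic set. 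Since (\ref{a16}), (\ref{a17}), (\ref{a18}), (\ref{a19}) already carry out exactly this bookkeeping and establish
$$\frac{1}{\sqrt{L}}\int_{B\times F^{L}}f_0\,c_j\,f^n\,dvol_Bdvol_L\longrightarrow d_j,\qquad j=1,2,3,4,$$
summing the four limits yields ${\rm lim}_{L\rightarrow+\infty}\frac{1}{\sqrt{L}}\int_Mf_0\,\Omega_4(f_1,f_2)_g=d_1+d_2+d_3+d_4$, hence the claimed identity for the Wodzicki residue. For the cocycle assertion, I would note that the Hochschild $2$-cocycle identity is linear in the functional $(f_0,f_1,f_2)\mapsto {\rm Wres}\big(f_0[\widetilde{F},f_1][\widetilde{F},f_2]\big)$, so it is preserved under the scaling by $\frac{1}{\sqrt{L}}$ and under the pointwise limit $L\to\infty$; consequently the limiting functional is again a Hochschild $2$-cocycle on $C^{\infty}(M)$.

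The hard part will be justifying the interchange of the limit $L\to\infty$ with the integration over $B\times F^{L}$ near the characteristic set $C(F)$: one has to verify that the singular powers of $L$ appearing in $c_1,\dots,c_4$ are exactly cancelled by the $\frac{1}{\sqrt{L}}$ normalization together with the degeneration of $dvol_L$, and that the resulting densities remain summable under the stated Hausdorff-measure hypotheses (which is precisely why those hypotheses are imposed, as in the earlier theorems). In practice this step reduces to invoking the term-by-term limits (\ref{a16})--(\ref{a19}), the surface-measure limit (\ref{a2}), and the dominated-convergence argument already employed in Sections~2 and~3; no genuinely new estimate is required beyond what those displays supply.
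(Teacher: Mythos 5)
Your proposal is correct and follows essentially the same route as the paper: it invokes Connes' identity ${\rm Wres}\big(f_0[\widetilde{F},f_1][\widetilde{F},f_2]\big)=\int_Mf_0\Omega_4(f_1,f_2)_g$, substitutes the decomposition of $\Omega_4$ into $c_1,\dots,c_4$, and sums the term-by-term limits (\ref{a16})--(\ref{a19}) together with the surface-measure limit (\ref{a2}). Your additional remarks on the preservation of the Hochschild cocycle identity under the limit and on the role of the Hausdorff-measure hypotheses near $C(F)$ go slightly beyond what the paper makes explicit, but do not change the argument.
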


\section*{Data availability}
No data was used for the research described in the article.

\section*{Acknowledgements}
This work was supported by NSFC No. 11771070. 


\begin{thebibliography}{20}

\bibitem{ARS}
L. Al\'{i}as, A. Romero, M. S\'{a}nchez, {\it Spacelike hypersurfaces of constant mean curvature and Clabi-Bernstein type problems}, Tohoku Math. J. {\bf 49}(1997), 337-345.

\bibitem{Ac}
T. Ackermann, {\it A note on the Wodzicki residue}, J. Geom. Phys. {\bf 20}(1996), 404-406.

\bibitem{BTV}
Z. Balogh, J. Tyson, E. Vecchi, {\it Intrinsic curvature of curves and surfaces and a Gauss-Bonnet theorem in the Heisenberg group}, Math. Z. {\bf 287}(2017), 1-38.

\bibitem{BO}
R. Bishop, B. O'Neill, {\it Manifolds of negative curvature}, Trans. Amer. Math. Soc. {\bf 145}(1969), 1-49.

\bibitem{Co2}
A. Connes, {\it The action functinal in noncommutative geometry}, Commun. Math. Phys. {\bf 117}(1998), 673-683.

\bibitem{Co1}
A. Connes, {\it Quantized calculus and applications}, in: \uppercase\expandafter{\romannumeral11}th International Congress of Mathematical Physics, Internat Press (1995), 15-36.

\bibitem{CDPT}
L. Capogna, D. Danielli, S. Pauls, J. Tyson, {\it An introduction to the Heisenberg group and the sub-Riemannian isoperimetric problem}, Progress in Mathematics, 259. Birkhauser Verlag, Basel, 2007. xvi+223 pp.

\bibitem{DD}
F. Dobarro, E. Dozo, {\it Scalar curvature and warped products of Riemannian manifolds}, Trans. Amer. Math. Soc. {\bf 303}(1987), 161-168.

\bibitem{DL}
L. Dabrowski, A. Sitarz, P. Zalecki, {\it Spectral metric and Einstein functionals}, Adv. Math. {\bf 427}(2023), 109128.

\bibitem{EJK}
P. Ehrlich, Y. Jung, S. Kim, {\it Constant scalar curvatures on warped product manifolds}, Tsukuba J. Math. {\bf 20}(1996), 239-265.

\bibitem{FGKU}
M. Fern\'{a}ndez-L\'{o}pez, E. Garc\'{i}a-R\'{i}o, D. Kupeli, B. \"{U}nal, {\it A curvature condition for a twisted product to be a warped product}, Manu. math. {\bf 106}(2001), 213-217.


\bibitem{FMV}
J. Fastenakels, M. Munteanu, J. Van Der Veken, {\it Constant angle surfaces in the Heisenberg group}, Acta Math. Sin. (Engl. Ser.) {\bf 27}(2011), 747-756.

\bibitem{KW}
W. Kalau, M. Walze, {\it Gravity, noncommutative geometry and the Wodzicki residue}, J. Geom. Phys. {\bf 16}(1995), 327-344.

\bibitem{Ka}
D. Kastler, {\it The Dirac operator and gravitation}, Commun. Math. Phys. {\bf 166}(1995), 633-643.

\bibitem{LiuW}
K. Liu, Y. Wang, {\it Adiabatic limits, vanishing theorems and the noncommutative residue}, Sci. China Ser. A-Math. {\bf 52}(2009), 2699-2713.

\bibitem{LiuZ}
K. Liu, W. Zhang, {\it Adiabatic limits and foliations}, In: Topology, Geometry, and Algebra: Interactions and New Directions, Contemp Math, Vol. 279. Providence, RI: American Mathematical Society, 2001, 195-208.

\bibitem{U2}
B. \"{U}nal, {\it Doubly warped products}, Differential Geom. Appl. {\bf 15}(2001), 253-263.

\bibitem{UM}
W. Ugalde, {\it A construction of critical GJMS operators using Wodzicki's residue}, Commun. Math. Phys. {\bf 261}(2006), 771-788. 

\bibitem{WW}
Y. Wang, S. Wei, {\it Gauss-Bonnet theorems in the affine group and the group of rigid motions of the Minkowski plane}, Sci. China Math. {\bf 64}(2021), 1843-1860.

\bibitem{WW1}
Y. Wang, S. Wei, {\it Gauss-Bonnet theorems in the BCV spaces and the twisted Heisenberg group}, Results Math. {\bf 75}(2020), 126.

\bibitem{Wa1}
Y. Wang, {\it Differential forms and the Wodzicki residue for manifolds with boundary}, J. Geom. Phys. {\bf 56}(2006), 731-753.

\bibitem{Wa2}
Y. Wang, {\it Differential forms and the noncommutative residue for manifolds with boundary in the non-product case}, Lett. Math. Phys. {\bf 77}(2006), 41-51.

\bibitem{Wa3}
Y. Wang, {\it Gravity and the noncommutative residue for manifolds with boundary}, Lett. Math. Phys. {\bf 80}(2007), 37-56.

\bibitem{Wa4}
Y. Wang, {\it Lower-dimensional volumes and Kastler-Kalau-Walze type theorem for manifolds with boundary}, Commun. Theor. Phys. {\bf 54}(2010), 38-42.

\bibitem{Wa5}
J. Wang, Y. Wang, {\it The Kastler-Kalau-Walze type theorem for six-dimensional manifolds with boundary}, J. Math. Phys. {\bf 56}(2015), 052501.

\bibitem{WW2}
J. Wang, Y. Wang, T. Wu, {\it Dirac operators with torsion, spectral Einstein functionals and the noncommutative residue}, J. Math. Phys. {\bf 64}(2023), 102505.

\bibitem{Wu1}
T. Wu, Y. Wang, {\it A general Dabrowski-Sitarz-Zalecki type theorems for manifold with boundary}, arXiv2308.15850, 2023.


\end{thebibliography}
\end{document}